\newenvironment{@abssec}[1]{%
    \if@twocolumn

      \section*{#1}%
    \else

      \vspace{.05in}\footnotesize
      \parindent .2in
 {\upshape\bfseries #1. }\ignorespaces
    \fi}
\par\vspace{.1in}\fi}
\newenvironment{keywords}{\begin{@abssec}{\keywordsname}}{\end{@abssec}}
\newenvironment{AMS}{\begin{@abssec}{\AMSname}}{\end{@abssec}}
\newcommand\keywordsname{Key words}
\newcommand\AMSname{AMS subject classifications}
\newcommand\AMname{AMS subject classification}
\newcommand\restr[2]{{
\left.\kern-\nulldelimiterspace 
#1 
\vphantom{|} 
\right|_{#2} 
}}
\newtheorem{theorem}{Theorem}[section]
\newtheorem{lemma}[theorem]{Lemma}
\newtheorem{corollary}[theorem]{Corollary}
\newtheorem{proposition}[theorem]{Proposition}
\newtheorem{remark}[theorem]{Remark}
\newtheorem{definition}[theorem]{Definition}
\newtheorem{mainthm}{Theorem}
\newtheorem{mainlem}{Lemma}
\newtheorem{thm}{Theorem}
\newtheorem{lem}[thm]{Lemma}
\newcommand{\NN}{\mathbb{N}}
\newcommand{\RR}{\mathbb{R}}
\def\XXint#1#2#3{{\setbox0=\hbox{$#1{#2#3}{\int}$}
\vcenter{\hbox{$#2#3$}}\kern-.5\wd0}}
\newcommand{\link}{\mathop{\circ\kern-.35em -}}
\newcommand{\ol}{\overline}
\newcommand{\pa}{\partial}
\newcommand{\dv}{\mathop{\mathrm{div}}}
\newcommand{\gr}{\nabla}
\newcommand{\al}{\alpha}
\newcommand{\be}{\beta}
\newcommand{\Ga}{\Gamma}
\newcommand{\De}{\Delta}
\newcommand{\ve}{\varepsilon}
\newcommand{\la}{\lambda}
\newcommand{\La}{\Lambda}
\newcommand{\te}{\theta}
\newcommand{\om}{\omega}
\newcommand{\Om}{\Omega}
\newcommand{\rn}{{\mathbb{R}}^N}
\newcommand{\sg}{\sigma}
\newcommand\setbld[2]{\left\{ #1 \; :\; #2\right\}}
\newcommand{\tin}{{\text{in }}}
\newcommand{\ton}{{\text{on }}}
\newcommand{\tfor}{{\text{for }}}
\newcommand{\tforall}{{\text{for all }}}
\newcommand{\tas}{{\text{as }}}
\newcommand{\id}{{\rm Id}}
\newcommand\pp[1]{\left( #1\right)}
\newcommand{\cdottone}{{\boldsymbol{\cdot}}}
\newcommand{\C}{\mathcal{C}}
\newcommand{\cC}{\mathcal{C}}
\newcommand{\cH}{{\mathcal H}}
\newcommand{\cJ}{{\mathcal J}}
\newcommand{\cS}{{\mathcal S}}
\newcommand{\cU}{{\mathcal U}}
\newcommand{\cV}{{\mathcal V}}
\newcommand{\cX}{\mathcal{X}}
\numberwithin{equation}{section}
\title{Why are the solutions to overdetermined problems usually ``as symmetric as possible"?
}
\author{Lorenzo Cavallina
\thanks{
This research was partially supported by JSPS KAKENHI (under grant nos. JP20K22298, JP22K13935, and JP21KK0044).
}}
\date{\emph{To my Professor, Shigeru Sakaguchi, for his 65th birthday}}
\begin{document}

\maketitle

\begin{abstract}
 In this paper, we study the symmetry properties of nondegenerate critical points of shape functionals using the implicit function theorem. We show that, if a shape functional is invariant with respect to some one-parameter group of rotations, then its nondegenerate critical points (bounded open sets with smooth enough boundary) share the same symmetries. We also consider the case where the shape functional exhibits translational invariance in addition to just rotational invariance. 
Finally, we study the applications of this result to the theory of one/two-phase overdetermined problems of Serrin-type. 
En passant, we give a simple proof of the fact that, under suitable smoothness assumptions, the ball is the only nondegenerate critical point of the Lagrangian associated to the maximization problem for the torsional rigidity under a volume constraint. We remark that the proof does not rely on either the method of moving planes or rearrangement techniques.
\end{abstract}

\begin{keywords}
overdetermined problem, symmetry, free boundary problem, optimization problem, shape derivatives, implicit function theorem. 
\end{keywords}

\begin{AMS}
35N25, 35J15, 35Q93
\end{AMS}

\pagestyle{plain}
\thispagestyle{plain}

\section{Introduction: 
``\emph{The answer is the ball}''}\label{introduction}
This was a recurring joke among the students of Professor Sakaguchi's Lab. No matter the question, the answer always seemed to be the ball. Indeed, among many, the ball is the solution to the following problems:

\begin{enumerate}[label=\emph{\alph*}), nolistsep]
    \item The ball minimizes the first Dirichlet eigenvalue of the Laplacian among open sets of a given volume
(the ``Faber-Krahn inequality", \cite{Fa, Kr}, see also \cite{Ka}
). 

\item The ball maximizes the ratio $\la_2/\la_1$ of the first two Dirichlet eigenvalues of the Laplacian
(the ``Payne-P\'olya-Weinberger conjecture", proven by \cite{ashbaugh benguria}).

\item The ball maximizes the second Neumann eigenvalue of the Laplacian among open sets of a given volume
(proven by \cite{Sz} in dimension $2$,
by \cite{Wei} in any dimension).

\item The ball maximizes the second Steklov eigenvalue of the Laplacian among open sets of a given volume
(proven by \cite{Ws} in dimension $2$, by \cite{Br} in any dimension).

\item\label{isoperimetric} The ball maximizes the volume among open sets of given surface area, or equivalently, minimizes the surface area among open sets of given volume (the ``isoperimetric inequality": although this is considered to be one of the oldest problems in shape optimization one has to wait until the second half of the last century for a rigorous proof in all dimensions, see \cite{DeG, Burago Zalgaller} for some history).

\item\label{saint venant} The ball maximizes the torsional rigidity, that is, the integral of the solution to the boundary value problem
\begin{equation}\label{torsion eq}
-\De u = 1 \quad \tin \Om,\quad u=0\quad \ton \pa\Om
\end{equation}
among open sets $\Om$ of given volume (the ``Saint-Venant inequality", conjectured by \cite{SV} and proven in full generality by \cite{Ta}).

\item\label{soap bubble} The ball is the unique bounded domain with constant mean curvature (``Aleksandrov's soap bubble theorem", due to \cite{Ale1958}). 

\item\label{serrin} The ball is the unique domain $\Om$ such that the solution of \eqref{torsion eq}
also satisfies $|\gr u|\equiv const$ on $\pa\Om$ 
(``Serrin's overdetermined problem": see \cite{Se1971} for the original proof when $\pa\Om$ is of class $C^2$ and $u\in C^2(\ol\Om)$, based on the method of moving planes and a refined version of Hopf's boundary lemma; see also \cite{Wei preceding} for an alternative proof that makes use of integral identities).

\item \label{garofalotachi} The ball is the unique bounded open set $\Om$ such that $\rn\setminus\ol\Om$ is connected and the solution of the boundary value problem
\begin{equation*}
-\De u=0\quad \tin\rn\setminus \ol\Om, \quad 0\le u<1 \quad \tin \rn\setminus\ol\Om, \quad u=1\quad \ton\pa\Om
\end{equation*}
also satisfies $|\gr u|\equiv const$ on $\pa\Om$ (see \cite{reichel, garofalo sartori} for two different proofs and generalizations of the above result).
\end{enumerate}

Notice that all problems above share a common structure. They all take into consideration the critical shapes $\Om$ (in many cases, maximizers or minimizers) of some shape functional $J=J(\Om)$. In particular, we remark that the overdetermined conditions of \ref{soap bubble} and \ref{serrin} translate to the necessary conditions for the optimizers of problems \ref{isoperimetric} and \ref{saint venant} respectively.  

Another feature shared by all of the problems above is rotational invariance. Notice that, if $\Om$ is a solution to a rotationally invariant problem, then any rotation $\widetilde\Om$ of $\Om$ is also a solution to the same problem. Furthermore, if the problem only had one solution, then one would conclude that $\Om=\widetilde\Om$ and, thus, that the solution $\Om$ is radially symmetric. Unfortunately, in most applications, one deals with problems that are invariant with respect to a larger group of transformations (for instance, including translations). In principle, such problems have infinitely many solutions (whenever they have at least one) and thus the same naive reasoning cannot be applied directly to obtain radial symmetry. 
In this paper, we aim to give some necessary conditions to get ``local uniqueness" for solutions (maybe up to some group of transformations depending on the case at hand). Now, once ``local uniqueness" is shown, one can reason along the same lines as before and conclude that $\Om=\widetilde\Om$ for all sufficiently small rotations $\widetilde\Om$ of $\Om$, thus showing that the solution $\Om$ is radially symmetric. 

\section{Main results} 
As briefly mentioned in the introduction, we will consider the critical shapes $\Om$ of some shape functional $J=J(\Om)$.
The ``right" condition that implies ``local uniqueness" is given by the so-called \emph{non-degeneracy} of the critical shape $\Om$. The precise definitions of criticality and non-degeneracy will be given in section \ref{section nondegeneracy}.  

Let us first introduce some notation. 
Let $\NN$ denote the set of positive integers. For $k\in\NN\cup\{0\}$ and $\al\in [0,1]$, let $\cC^{k,\al}$ denote the collection of bounded open sets of $\rn$ ($N\ge 2$) whose boundary is of class $C^{k,\al}$ (here $\cC^{k,0}=\cC^k$ simply stands for the collection of bounded open sets of class $C^k$). 

Let us also recall the definition of a \emph{one-parameter group of rotations}. We say that a subgroup $\{R_\vartheta\}_{\vartheta\in \RR}$ of the special orthogonal group $SO(N)$ is a one-parameter group of rotations if $\vartheta\mapsto R_\vartheta$ is a continuous mapping from $\RR$ to $SO(N)$ that satisfies
\begin{equation*}
    R_{s+t}=R_s\circ R_t \quad \tforall s,t\in\RR.
\end{equation*}
Thus, in particular, $R_0=\id$ is the identity element and $\{R_\vartheta\}_{\vartheta\in\RR}$ is an abelian group.

Moreover, let $\{R_\vartheta\}_{\vartheta\in\RR}$ be a one-parameter group of rotations and let $J: \cC^{m,\al}\to \RR$ be a shape functional. We say that $J$ is invariant with respect to $\{R_\vartheta\}_{\vartheta\in\RR}$ if $J(R_\vartheta(\om))=J(\om)$ for all $\vartheta\in\RR$ and $\om\in \cC^{m,\al}$. Analogously, we say that an open set $\om\in \cC^{m,\al}$ is invariant with respect to $\{R_\vartheta\}_{\vartheta\in\RR}$ if $R_\vartheta(\om)=\om$ for all $\vartheta\in\RR$.

In what follows, we will state the main theorem of this paper, which shows the link between non-degeneracy and symmetry. 

\begin{mainthm}\label{mainthm I}
Let $m\in\NN$ and $\al\in [0,1]$. Moreover, let $J: \cC^{m,\al}\to \RR$ be a shape functional that is invariant with respect to some one-parameter group of rotations $\{R_\vartheta\}_{\vartheta\in\RR}$.
If $\Om\in \cC^{m+2,\al}$ is a non-degenerate critical point of $J$ (see Definition \ref{def nondeg1}), then $\Om$ is also invariant with respect to $\{R_\vartheta\}_{\vartheta\in\RR}$. In particular, if $J$ is invariant with respect to the whole rotation group $SO(N)$, then $\Om$ is radially symmetric.  
\end{mainthm}

The same result holds true under some natural modifications if the shape functional $J$ is also translation-invariant
. To this end, we will consider the restriction of $J$ to the space $\cC^{m,\al}_\star$, defined as
\begin{equation*}
    \cC^{m,\al}_\star:=\setbld{\om\in\cC^{m,\al}}{{\rm Bar}(\om)=0} \quad \text{for all }m\in\NN, 
\end{equation*}
where ${\rm Bar}(\om):= \frac{\int_{\om} x\ dx}{|\om|}$ is the barycenter of $\om$.
\begin{mainthm}\label{mainthm II}
Let $m\in\NN$ and $\al\in [0,1]$. Moreover, 
let $J: \cC^{m,\al}_\star\to \RR$ be a shape functional that is invariant with respect to some one-parameter group of rotations $\{R_\vartheta\}_{\vartheta\in\RR}$. 
If $\Om\in \cC^{m+2,\al}_\star$ is a non-degenerate critical point of $J$ (see Definition \ref{def nondeg2}), then $\Om$ is also invariant with respect to $\{R_\vartheta\}_{\vartheta\in\RR}$. In particular, if $J$ is invariant with respect to the whole rotation group $SO(N)$, then $\Om$ is radially symmetric.  
\end{mainthm}

\begin{remark}
Notice that rotational symmetry in the case where $\Om$ is a nondegenerate local minimizer (or maximizer) can be shown by making use of the quantitative stability results of \cite{DL2019} under some structural assumptions on the shape functional.
On the other hand, the definition of nondegenerate critical shape used in this paper is general enough to deal with saddle shapes. In particular, this is necessary in order to give a symmetry result in Theorem \ref{serrin two phase} concerning the two-phase Serrin's overdetermined problem, for which it is known that, unlike the classical one-phase setting, saddle shape solutions exist (see \cite{cava2018, CY1}).
\end{remark}

This paper is structured as follows. In section \ref{sec 3}, we introduce the concept of shape derivative. There, we also recall a famous result by Novruzi--Pierre \cite[Lemma 3.1]{structure} that states that every small enough perturbation of a set can be rewritten in terms of a perturbation of the boundary along the normal direction. In section \ref{section nondegeneracy}, we give the definitions of nondegenerate critical shape for parametrized shape functional (that may or may not be translation-invariant). Here we introduce the key Lemmas \ref{lem I}-\ref{lem II}, which, roughly speaking, state that, in a neighborhood of a given nondegenerate critical shape, the set of critical shapes forms a smooth branch, whose elements are uniquely identified by the value of the given parameter. Lemma \ref{lem I} has been proved in \cite{cava nondegenerate}, while Lemma \ref{lem II} is a new result and will be proved in section \ref{sec 5}. Section \ref{sec 6} is devoted to the proof of Theorems \ref{mainthm I}--\ref{mainthm II}. Here, we combine the results of the previous sections to give a rigorous justification of the intuitive approach given in the introduction. 
Finally, in section \ref{sec 7}, we show how these results relate to the theory of one/two-phase overdetermined problems of Serrin-type and show symmetry results that, in some sense, bridge the gap between problems \ref{saint venant} and \ref{serrin}.

\section{The theory of shape perturbation and the structure of shape derivatives}\label{sec 3}

Let $m\in\NN$ and $\al\in [0,1]$ and set
\begin{eqnarray*}
    \Theta^{m,\al}:=\setbld{\te\in C^{m,\al}(\rn,\rn)}{\norm{\te}_{C^{m,\al}}<\infty}.
\end{eqnarray*}
Notice that the function space above is a Banach space with respect to the corresponding H\"older norm. 
We can define a perturbation of an open set $\om\in\cC^{m,\al}$ as \begin{equation}\label{interior perturbation}
(\id+\te)(\om):=\setbld{x+\te(x)}{x\in\om},\quad \tfor \te\in\Theta^{m,\al}  
\end{equation}
Notice that, if $\te$ is small enough, the set defined in \eqref{interior perturbation} also belongs to $\cC^{m,\al}$ and is homeomorphic to $\om$. 
The one defined in \eqref{interior perturbation} is not the only way of perturbing a given set. Another classical way is given by the so-called normal (or Hadamard) perturbations of the boundary.  
For fixed open set $\om\in\cC^{m+1,\al}$, let $n_\om$ denote its outward unit normal vector. Moreover, let
\begin{equation}
E:C^{m,\al}(\pa\om)\to C^{m,\al}(\rn, \rn)    
\end{equation}
be a bounded linear ``extension operator" that satisfies the following 
\begin{equation*}
    \restr{(E\xi)}{\pa\om}=\xi n_\om.
\end{equation*}
For a construction of $E$, we refer to \cite[Lemma 6.38]{GT} and the remarks in \cite[Appendix]{cava nondegenerate}.
For $\xi\in C^{m+1,\al}(\pa\om)$ small enough, notice that the following set $\om_\xi$ is a well-defined element of $\cC^{m,\al}$:
\begin{equation*}
\om_\xi:= (\id+E\xi)(\om).    
\end{equation*}

It is known that, under sufficient smallness and regularity assumptions, both kinds of perturbations are equivalent, that is, they describe the same sets. This assertion is rigorously justified by the following lemma. 

\begin{lem}[Reparametrization lemma, {\cite[Lemma 3.1]{structure}}]\label{reparametrization}
Let $\om\in \cC^{m+1,\al}$ and set 
\begin{equation*}
C^{m,\al}(\pa\om,\pa\om):=\setbld{G\in C^{m,\al}(\pa\om,\rn)}{G(\pa\om)\subset \pa\om}.
\end{equation*}
Then, there exist an open neighborhood $\cU$ of $0\in\Theta^{m+1,\al}$ and a unique pair of $C^1$ functions 
\begin{equation*}
    \Psi:\cU\to C^{m,\al}(\pa\om), \quad G:\cU\to C^{m,\al}(\pa\om,\pa\om)
\end{equation*}
such that for all $\te\in\cU$,
\begin{equation*}
    (\id+\te)\circ G(\te)=\id+\Psi(\te)\ n_\om\quad \ton \pa\om.
\end{equation*}
Moreover, the Fr\'echet derivative of $\Psi$ at $\te=0$ is given by
\begin{equation*}
\Psi'(0)[\te]=\restr{\te}{\pa\om}\cdot n_\om \quad\text{for any }\te\in\Theta^{m+1,\al}.
\end{equation*}
\end{lem}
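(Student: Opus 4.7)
The plan is to apply the implicit function theorem to the map
\begin{equation*}
F(\theta, G, \Psi) := (\id+\theta)\circ G - \id - \Psi\, n_\om,
\end{equation*}
viewed as a function $F:\Theta^{m,\al}\times \cX \times C^{m-1,\al}(\pa\om)\to C^{m-1,\al}(\pa\om,\rn)$ where $\cX$ is (a chart around $\id$ in) the space $C^{m-1,\al}(\pa\om,\pa\om)$. The zero $F(0,\id,0)=0$ is the obvious starting point, and any $(\theta,G,\Psi)$ solving $F=0$ exactly encodes the required identity on $\pa\om$. So the job is to verify that $F$ is $C^1$ on a neighborhood of $(0,\id,0)$ and that the partial Fr\'echet derivative with respect to $(G,\Psi)$ at this point is a bounded linear isomorphism.

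The first technical point is to give $C^{m-1,\al}(\pa\om,\pa\om)$ a local manifold structure near $\id$: since $\pa\om\in C^{m,\al}$, it admits a $C^{m,\al}$ tubular neighborhood with a well-defined smooth nearest-point projection $\pi$ onto $\pa\om$, and one may parametrize elements of $\cX$ by
\begin{equation*}
G = \pi\circ(\id+\tau),\qquad \tau\in C^{m-1,\al}(\pa\om,\rn),
\end{equation*}
restricting $\tau$ to a small neighborhood of $0$. Since at $\tau=0$ the differential of $G\mapsto\tau$ at $\id$ sends $\tau$ to its tangential component $\tau^{\mathrm{tan}}$, we may identify variations $\dot G$ at $\id$ with tangential $C^{m-1,\al}$ vector fields on $\pa\om$. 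The regularity loss from $m$ to $m-1$ in the target space is exactly what is needed for the composition map $(\theta,G)\mapsto(\id+\theta)\circ G$ to be $C^1$ between these H\"older spaces (composition gains smoothness only by losing one derivative), and this is where one must be careful.

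The derivative of $F$ with respect to $(G,\Psi)$ at $(0,\id,0)$ acts by
\begin{equation*}
D_{(G,\Psi)}F(0,\id,0)[\dot G,\dot\Psi] \;=\; \dot G - \dot\Psi\, n_\om.
\end{equation*}
Since $\dot G$ is tangent to $\pa\om$ and $\dot\Psi\, n_\om$ is normal, this map realizes precisely the orthogonal decomposition of a $C^{m-1,\al}$ vector field along $\pa\om$ into tangential and normal components, hence is a bounded linear isomorphism onto $C^{m-1,\al}(\pa\om,\rn)$. The implicit function theorem then delivers unique $C^1$ maps $\te\mapsto G(\te)$ and $\te\mapsto \Psi(\te)$ on a neighborhood $\cU$ of $0\in\Theta^{m,\al}$ solving $F(\te,G(\te),\Psi(\te))=0$, which is the desired identity.

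For the formula $\Psi'(0)[\te]=\te\cdot n_\om$, I would differentiate the relation $(\id+\te)\circ G(\te)=\id+\Psi(\te)n_\om$ at $\te=0$ to get, on $\pa\om$,
\begin{equation*}
\te + G'(0)[\te] \;=\; \Psi'(0)[\te]\, n_\om,
\end{equation*}
and then take the normal component, using that $G'(0)[\te]$ is tangent to $\pa\om$. The main obstacle I anticipate is the manifold aspect: making rigorous the $C^1$ structure on $C^{m-1,\al}(\pa\om,\pa\om)$ near $\id$ and checking that the projection-based chart is compatible with the differentiability of composition in H\"older spaces. Once this is set up, the IFT argument proceeds in a standard fashion.
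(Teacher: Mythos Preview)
The paper does not prove this lemma itself; it cites \cite[Lemma 3.1]{structure} and only adds a remark on how to extend the original $C^m$ argument to the H\"older case $C^{m,\al}$. That remark does, however, reveal the method: the proof in \cite{structure} rests on an auxiliary defining function $\zeta_0$ of class $C^{m,\al}$ with $\pa\om=\{\zeta_0=0\}$ and $\nabla\zeta_0\ne0$ on $\pa\om$. The constraint that $G(\te)$ take values in $\pa\om$ is then expressed as the \emph{scalar} equation $\zeta_0\circ G(\te)=0$, and the implicit function theorem is applied in ordinary Banach spaces rather than on a Banach manifold.

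Your approach is correct in outline but takes a genuinely different route: you keep $(G,\Psi)$ as joint unknowns and need a Banach-manifold chart on $C^{m-1,\al}(\pa\om,\pa\om)$ near $\id$, built from the nearest-point projection $\pi$. The defining-function argument of \cite{structure} is technically lighter precisely because it bypasses this chart: the constraint $G(\pa\om)\subset\pa\om$ is encoded by $\zeta_0=0$, so one works entirely in linear H\"older spaces. The obstacle you rightly flag---that $\pi$ is only $C^{m-1,\al}$ when $\pa\om\in C^{m,\al}$, so verifying that $\tau\mapsto\pi\circ(\id+\tau)$ is a $C^1$ chart in the required H\"older topology is delicate---is exactly what the defining-function trick sidesteps. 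Your derivative computations (the tangential/normal splitting giving invertibility of the linearization, and $\Psi'(0)[\te]=\te\cdot n_\om$ from the normal component of the differentiated identity) are correct.
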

\begin{remark}[On the regularity]
Lemma \ref{reparametrization} was originally stated only for $\al=0$. The proof relied on the use of some auxiliary function $\zeta_0\in C^{m+1}(\rn,\RR)$ with support in an open set $\om_0\supset\pa\om$ that satisfies
\begin{equation*}\label{properties of zeta}
\pa\om=\setbld{x\in\om_0}{\zeta_0(x)=0}, \quad \gr\zeta_0\ne 0 \quad \ton \pa\om.
\end{equation*}
The case $\al\in(0,1]$ follows by a slight modification. Indeed, we can construct a function $\zeta_0$ of class $C^{m+1,\al}$ with the same properties by
taking $\zeta_0$ to be the oriented (or signed) distance function to $\pa\om$ (see \cite[Remark 3.3]{structure} and \cite[Theorem 8.2, (iii)]{SG}). 
The rest of the proof then follows verbatim from that of \cite[Lemma 3.1]{structure}.
\end{remark}

\begin{proposition}\label{G is bij}
For $\te\in\cU$ small enough, the map $G(\te): \pa\om\to\pa\om$ is a bijection.
\end{proposition}
\begin{proof}
We will first show that there exists a natural number $n\in \NN$ such that if $\norm{\theta}<1/n$ then the map $G(\theta): \pa\om\to\pa\om$ is surjective. Assume, for sake of contradiction, that for all $n\in\NN$ there exists an element $\theta_n\in\Theta$ with $\norm{\theta}<1/n$ but such that $G(\theta_n)$ is not surjective. In particular, there exists a sequence of points $x_n\in\pa\om$ such that 
\begin{equation}\label{no surj}
x_n\notin G(\theta_n)(\pa\om).    
\end{equation}
Since $\pa\om$ is compact, we can extract a subsequence of $x_n$ converging to some point $x\in\pa\om$. Now, since $\pa\om$ is of class $C^{m,\al}$, up to a change of coordinates, we can assume that $x=(0,\dots,0,x_N)$ and find a small neighborhood $V$ of $x$, an $N-1$ dimensional open ball $B$ centered at the origin of $\RR^{N-1}$ and a function $f\in C^{m,\al}(\RR^{N-1})$ such that 
\begin{equation*}
\pa\om\cap V=\setbld{(\hat x, x_N)\in \RR^{N-1}\times \RR}{\hat x\in B, \; x_N=f(\hat x)}.
\end{equation*}
For arbitrary $\hat x\in \RR^{N-1}$ and $x_N\in \RR$, set $\pi(\hat x, x_N):=\hat x$ and $\psi(\hat x):= (\hat x, f(\hat x))$ (notice that $\pi\circ\psi=\id$ and $\restr{\psi\circ\pi}{\pa\om\cap V}=\restr{\id}{\pa\om\cap V}$).
Now, consider the following function: 
\begin{eqnarray*}
F: & B\times \cU \longrightarrow & B\times \cU\\
& (y,\theta) \mapsto &\left(\pi\circ G(\theta)\circ \psi(y), \theta \right).
\end{eqnarray*}
We will show that $F$ is locally invertible in a neighborhood of $(0,0)\in B\times\cU$ and this will lead to a contradiction. 
To this end, first notice that $F$ is well defined and Fr\'echet differentiable in a neighborhood of $(0,0)\in B\times\cU$. Moreover, its (total) Fr\'echet derivative at $(0,0)$ in the direction $(y,\theta)\in \RR^{N-1}\times \Theta^{m+1,\al}$ is given by
\begin{equation*}
F'(0,0)[y,\theta]= \pa_y F(0,0)[y]+ \pa_\theta F(0,0)[\theta]= (y,0)+(-\pi\circ\theta_\tau(x),\theta).
\end{equation*}
We remark that the above computation easily follows from the equalities below
\begin{equation*}
G(0)=\id, \quad \pi'(0)=\pi, \quad \psi(0)=x, \quad G'(0)[\theta]=-\theta_\tau:=-\theta+ (\theta\cdot n_\om)n_\om,
\end{equation*}
and we refer to \cite[(3.11)]{structure} for the proof of the last equality. It follows that $F'(0,0)$ has a continuous inverse given by 
\begin{equation*}
(y,\theta)\mapsto \left( y+\pi\circ \theta_\tau(x), \theta \right).
\end{equation*}
Therefore, by the inverse function theorem (also known as the ``local inversion theorem", see \cite[Theorem 1.2]{AP1983}), there exists an open neighborhood $\cV$ of $(0,0)\in B\times \cU$ such that the restriction $\restr{F}{\cV}: \cV\to F(\cV)$ is invertible and its inverse is of class $C^1$. In particular, the set $F(\cV)$ is an open neighborhood of $(0,0)=F(0,0)$. This implies the existence of some $\ve>0$ such that $(\hat x,\theta)\in F(\cV)$ whenever $|\hat x|+\norm \theta <\ve$.By construction, if $|\hat x|+\norm \theta <\ve$, then there exists a point $y\in B$ such that $(y,\theta)\in \cV$ and $F(y,\theta)=(\hat x,\theta)$. In particular, for $n\in\NN$ sufficiently large, we can take $\hat x=\pi(x_n)$ and $\theta=\theta_n$ in the above. Thus, there exists a point $y_n\in B$ such that 
\begin{equation*}
F(y_n,\theta_n)=\left( \pi(x_n),\theta_n\right) \implies   \pi\circ G(\theta_n)\circ \psi(y_n)=\pi(x_n).
\end{equation*}
Now, applying $\psi$ on both sides of the second equality above yields $G(\theta_n)\circ\psi(y_n)=x_n$. Finally, since $\psi(y_n)\in \pa\om$ by construction, this contradicts the assumption \eqref{no surj}. In other words, we have shown that the map $G(\theta_n)$ is surjective for $n$ large enough. By analogous reasoning, one can show that $G(\theta_n)$ is injective for $n$ large enough. 
\end{proof}
As a consequence of Lemma \ref{reparametrization} and Proposition \ref{G is bij}, we can state the following result.  
\begin{corollary}[Small rotations can be represented as Hadamard perturbations]\label{corol rotations}
Let $\om\in\cC^{m+1,\al}$ and let $\{R_\vartheta\}_{\vartheta\in\RR}$ be a one-parameter group of rotations. Then, there exist a threshold $\varepsilon>0$ and a $C^1$ map $\Psi:(-\varepsilon,\varepsilon) \to C^{m,\al}(\pa\om)$ such that, for all $\vartheta\in (-\varepsilon,\varepsilon)$, we have
\begin{equation*}
    R_\vartheta(\om) = \om_{\Psi(\vartheta)}.
\end{equation*}
\end{corollary}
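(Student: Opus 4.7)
The plan is to reduce the statement to a direct application of the Reparametrization Lemma \ref{reparametrization}. The natural candidate for a perturbation that produces $R_\vartheta(\om)$ is $\te_\vartheta(x):=R_\vartheta(x)-x$, which however is linear and therefore grows at infinity, so it does not belong to $\Theta^{m+1,\al}$. To fix this, I would fix a smooth cutoff $\chi\in C^\infty_c(\rn)$ with $\chi\equiv 1$ on a neighborhood of $\ol\om$ and replace $\te_\vartheta$ by
\begin{equation*}
\widetilde\te_\vartheta := \chi\,\te_\vartheta \in \Theta^{m+1,\al}.
\end{equation*}
By construction $\widetilde\te_0=0$ and, since $\chi\equiv 1$ on $\om$, $(\id+\widetilde\te_\vartheta)(\om)=R_\vartheta(\om)$. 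Moreover, the smoothness of the one-parameter group $\vartheta\mapsto R_\vartheta\in \cL(\rn)$ (standard for continuous one-parameter subgroups of $SO(N)$) makes the assignment $\vartheta\mapsto\widetilde\te_\vartheta$ a $C^\infty$ map from $\RR$ into $\Theta^{m+1,\al}$.

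Next, applying Lemma \ref{reparametrization} with $m$ replaced by $m+1$ (valid because $\om\in\cC^{m+1,\al}$), I obtain a neighborhood $\cU\subset\Theta^{m+1,\al}$ of $0$ and $C^1$ maps $\Psi_0:\cU\to C^{m,\al}(\pa\om)$, $G:\cU\to C^{m,\al}(\pa\om,\pa\om)$ with
\begin{equation*}
(\id+\widetilde\te_\vartheta)\circ G(\widetilde\te_\vartheta) = \id + \Psi_0(\widetilde\te_\vartheta)\,n_\om \quad \ton \pa\om.
\end{equation*}
Continuity of $\vartheta\mapsto\widetilde\te_\vartheta$ and the fact that $\widetilde\te_0=0$ guarantee that $\widetilde\te_\vartheta\in\cU$ for all $\vartheta\in(-\varepsilon,\varepsilon)$, provided $\varepsilon>0$ is small enough. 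I then define $\Psi(\vartheta):=\Psi_0(\widetilde\te_\vartheta)$; the chain rule gives $\Psi\in C^1((-\varepsilon,\varepsilon),C^{m,\al}(\pa\om))$.

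It remains to identify $R_\vartheta(\om)$ with $\om_{\Psi(\vartheta)}$. By Remark \ref{G is bij}, $G(\widetilde\te_\vartheta)$ is a bijection of $\pa\om$ onto itself for $|\vartheta|$ sufficiently small, so that
\begin{equation*}
R_\vartheta(\pa\om) = (\id+\widetilde\te_\vartheta)\circ G(\widetilde\te_\vartheta)(\pa\om) = (\id+\Psi(\vartheta)\,n_\om)(\pa\om) = \pa\om_{\Psi(\vartheta)}.
\end{equation*}
Both $R_\vartheta(\om)$ and $\om_{\Psi(\vartheta)}$ are images of $\om$ under homeomorphisms of $\rn$ that are $C^{m,\al}$-close to the identity, so each is the bounded open region enclosed by its boundary in the usual sense; having proved the boundaries coincide, the open sets must coincide too.

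The main technical point in this plan is the unboundedness of $R_\vartheta-\id$ on $\rn$, which prevents one from plugging it directly into Lemma \ref{reparametrization}; the cutoff trick neutralizes this issue without altering $R_\vartheta(\om)$, and everything else is essentially bookkeeping through the chain rule and the bijectivity statement of Remark \ref{G is bij}.
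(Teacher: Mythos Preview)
Your proposal is correct and follows exactly the route the paper intends: the paper gives no explicit proof but simply declares the corollary ``a consequence of Lemma \ref{reparametrization} and Remark \ref{G is bij}'', and your argument spells out precisely that deduction. Your additional observation that $R_\vartheta-\id$ must be multiplied by a compactly supported cutoff to land in $\Theta^{m+1,\al}$ is a genuine technical point the paper glosses over, and your handling of it is clean and correct.
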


Given a shape functional $J:\cC^{m,\al}\to\RR$ and an open set $\om\in\cC^{m+1,\al}$, set
\begin{equation*}
I(\te):=J\left((\id+\te)(\om)\right), \quad j_\om(\xi):=J(\om_\xi)    
\end{equation*}
for all $\te\in\Theta^{m+1,\al}$, $\xi\in C^{m+1,\al}(\pa\om)$ small enough. Then, for small $\te\in\Theta^{m+1,\al}$, Lemma \ref{reparametrization} yields
\begin{equation*}
I(\te):= J\left((\id+\te) (\om) \right) = 
J\left(\om_{\Psi(\te)}\right)=j_\om(\Psi(\te)),
\end{equation*}
where in the second equality we employed the fact that $    (\id+\te)(\pa\om)=(\id+\te)\circ G(\te)(\pa\om)
$ because $G(\te)$ is a bijection on $\pa\om$. 
As a result, if $j_\om$ is Fr\'echet differentiable at $0\in C^{m,\al}(\pa\om)$, then $I$ is Fr\'echet differentiable at $0\in \Theta^{m+1,\al}$. Now, differentiating the above at $\te=0$ yields
\begin{equation*}
I'(0)[\te] = j_\om'(0)\left[\Psi'(0)[\te]\right]=j_\om'(0)\left[ \restr{\te}{\pa\Om}\cdot n\right], \quad \text{for all }\te\in\Theta^{m+1,\al}.
\end{equation*}
The above identity can be interpreted as follows: the result of shape differentiation with respect to domain perturbations of the form $(\id+\te)(\om)$ only depends on the normal component of the restriction of the perturbation field $\te$ on $\pa\om$; moreover, it does so in a linear fashion. This is the so-called structure theorem for (first-order) shape derivatives (\cite[Theorem 2.1, (i)]{structure}). 

In the following sections we will also make use of the following corollary, simply obtained by the chain rule:
\begin{corollary}\label{corol structure}
Let $t\mapsto \Phi(t)$ be a $C^1$ map from a small open interval $(-\ve,\ve)$ to $\Theta^{m+1,\al}$. If $\Phi(t)$ admits the following Taylor expansion in $\Theta^{m+1,\al}$ for some $\varphi\in \Theta^{m+1,\al}$
\begin{equation*}
\Phi(t)=\id+t\varphi+o(t) \quad \text{as }t\to 0,
\end{equation*}
then 
\begin{equation*}
\restr{\frac{d}{dt} J\Big(\Phi(t)(\om)\Big)}{t=0}=j_\om'(0)\left[\restr{\varphi}{\pa\om}\cdot n_\om\right]. 
\end{equation*}
\end{corollary}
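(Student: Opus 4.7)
The plan is to reduce this to the structure theorem for first-order shape derivatives that was derived immediately before the corollary. Define the perturbation field $\te(t) := \Phi(t) - \id$, so that by assumption $\te(t) = t\varphi + o(t)$ in the $C^{m+1,\al}$ topology as $t\to 0$. In particular, for $t$ small enough, $\te(t)$ lies in the neighborhood $\cU\subset\Theta^{m+1,\al}$ where the reparametrization map $\Psi$ of Lemma \ref{reparametrization} is defined, and $\Phi(t)(\om)=(\id+\te(t))(\om)$.

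By the identity derived in the paragraph preceding the corollary, $J(\Phi(t)(\om)) = I(\te(t)) = j_\om(\Psi(\te(t)))$, which is a composition of Fr\'echet differentiable maps (the smoothness of $t\mapsto\te(t)$, the $C^1$-regularity of $\Psi$ from Lemma \ref{reparametrization}, and the assumed Fr\'echet differentiability of $j_\om$ at $0$). Thus the chain rule gives
\begin{equation*}
\restr{\frac{d}{dt} J\bigl(\Phi(t)(\om)\bigr)}{t=0} = j_\om'(0)\Bigl[\Psi'(0)\bigl[\te'(0)\bigr]\Bigr].
\end{equation*}
Since $\te'(0)=\varphi$ by the hypothesized Taylor expansion, and Lemma \ref{reparametrization} states that $\Psi'(0)[\varphi] = \restr{\varphi}{\pa\om}\cdot n_\om$, we obtain the desired formula.

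The only real subtlety is ensuring that the map $t\mapsto\te(t)\in\Theta^{m+1,\al}$ is differentiable at $t=0$ with derivative $\varphi$ in the appropriate Banach space norm; the hypothesis that $t\mapsto\Phi(t)$ is smooth (together with the stated Taylor expansion) delivers exactly this, so no further work is required. Everything else is a direct application of the chain rule combined with the structure theorem, and there is no genuine obstacle beyond bookkeeping of regularity indices.
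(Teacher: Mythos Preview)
Your proof is correct and matches the paper's approach exactly: the paper does not give a detailed argument but simply states that the corollary is ``obtained by the chain rule,'' which is precisely what you have spelled out. Your write-up supplies the details (defining $\te(t)=\Phi(t)-\id$, invoking Lemma~\ref{reparametrization} for $\Psi$ and its derivative, and composing) that the paper leaves implicit.
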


\section{The perturbation theory for the critical points of a shape functional}\label{section nondegeneracy}

Let $m\in\NN$ and $\al\in[0,1]$. Moreover, let $\La$ be (an open subset of) a Banach space (that we will think of as the space of ``parameters"). First of all, we will consider a \emph{parametrized} shape functional $\cJ:\cC^{m,\al}\times \La\to \RR$. Notice that the case of a ``non-parametrized" shape functional $J:\cC^{m,\al}\to\RR$ is trivially included by setting $\cJ(\om,\la):=J(\om)$ for all $\la\in\La$.

Now, 
suppose that, for $\om\in\cC^{m+1,\al}$, the map
\begin{equation}\label{J_om def}
\begin{aligned}
j_\om:\quad & C^{m,\al}(\pa\om)\times \La\to \RR, \\
\ & (\xi,\la)\mapsto J(\om_\xi,\la)
\end{aligned}
\end{equation}
is well-defined and Fr\'echet differentiable in a neighborhood $X\times \La$ of $(0,0)\in C^{m,\al}(\pa\om)\times\La$. 
By the structure theorem for first order shape derivatives (see \cite[Theorem 2.27]{SZ1992}, \cite[Theorem 2.1, (i)]{structure} for a general version of this result in the case of shape derivatives with respect to perturbations of the type \eqref{interior perturbation}), for any $\om\in\cC^{m+1,\al}$, there exists a distribution $T_{\om,\la}$ of order at most $m+1$ concentrated at $\pa\om$ such that the shape derivative of $J(\cdottone,\la)$ at $\Om$ in the direction $\xi$ can be expressed as $\langle T_{\om,\la},\xi\rangle$.
In what follows we will assume that for all $\om\in \cC^{m+1,\al}$ and $\la\in\La$, the distribution $T_{\om,\la}$ can be expressed as a function
\begin{equation}\label{g_om}
g_\om(\la)\in L^1(\pa\om).
\end{equation}
In other words, we assume that the partial Fr\'echet derivative of $j_\om$ with respect to the first variable is given by \begin{equation}\label{first derivative}
\pa_x j_\om(0,\la)[\xi]=\int_{\pa\om} g_\om(\la)\xi \quad \text{for all }\xi \in C^{m,\al}(\pa\om)\text{ and }\la\in\La.
\end{equation}
We remark that, in most applications, the function $g_\om(\la)$ will be far more regular.

We say that an open set $\om\in\cC^{m+1,\al}$ is a critical shape for $\cJ(\cdottone,\la)$ if 
\begin{equation}
\pa_x j_\om(0,\la)[\xi]= 0 \quad \text{for all }\xi\in\cC^{m,\al}(\pa\om),
\end{equation}
that is, $g_\om(\la)=0$.
Let $\Om\in\cC^{m+2,\al}$ be a critical shape for $\cJ(\cdottone,\la)$ and let $n$ denote the outward unit normal vector at $\pa\Om$. Moreover, suppose that for, some Banach space $Y\subset L^1(\pa\Om)$, the mapping
\begin{equation}\label{h def}
\begin{aligned}
h:\quad & C^{m+1,\al}(\pa\Om)\times \La\to Y, \\
\ & (\xi,\la)\mapsto g_{\Om_\xi}(\la)\circ \pp{\id+ \xi n} \end{aligned}
\end{equation}
is well-defined and Fr\'echet differentiable in a neighborhood $X\times \La$ of $(0,0)\in C^{m+1,\al}(\pa\Om)\times \La$.
Then, by composition, 
the mapping $j_\Om:X\times\La\to \RR$
is twice Fr\'echet differentiable at $\xi=0$ and there exists a bounded linear operator $Q: C^{m+1,\al}(\pa\Om) \to Y$ such that
\begin{equation}\label{Q}
\pa_{xx}^2\ j_\Om(0,0)[\xi,\eta]= \int_{\pa\Om} Q(\xi) \eta \quad \text{for all }\xi,\eta\in C^{m+1,\al}(\pa\Om).
\end{equation}

Now, employing the function $g_\Om(0)$ and the operator $Q$ constructed above, we can give the following definition of nondegenerate critical shapes. 
\begin{definition}[Nondegenerate critical shape]\label{def nondeg1}
Employing formulas \eqref{first derivative} and \eqref{Q}, we say that $\Om\in\cC^{m+2,\al}$ is a \emph{nondegenerate} critical shape for $\cJ:\cC^{m,\al}\times \La\to\RR$ at $\la=0$ if the following two conditions hold:
\begin{enumerate}[label=(\roman*)]
\item $\pa_x j_\Om(0,0)[\cdottone]\equiv 0$. In other words, $g_\Om(0)\equiv 0$ on $\pa\Om$ (\emph{criticality});
\item the mapping $Q:C^{m+1,\al}(\pa\Om)\to Y$ is a bijection (\emph{nondegeneracy}).
\end{enumerate}
\end{definition}

The following result (\cite[Theorem I]{cava nondegenerate}) shows the link between nondegeneracy and the existence of a uniquely defined parametrized family of critical shapes for small $\la$. 

\begin{mainlem}\label{lem I}
Suppose that $\Om\in\cC^{m+2,\al}$ is a nondegenerate critical shape (according to Definition \ref{def nondeg1}) for a shape functional $\cJ:\cC^{m,\al}\times\La\to\RR$ at $\la=0$. Then, there exist open neighborhoods $X'$ of $0\in C^{m+1,\al}(\pa\Om)$ and $\La'$ of $0\in \La$, and a Fr\'echet differentiable map $\widetilde\xi: \La'\to X'$ such that the set $\Om_{\widetilde\xi(\la)}$ is a critical shape for the shape functional $\cJ(\cdottone, \la)$. Moreover, for $(\xi,\la)\in X'\times \La'$, the set $\Om_{\xi}$ is a critical shape for $\cJ(\cdottone,\la)$ if and only if $\xi=\widetilde\xi(\la)$.
\end{mainlem}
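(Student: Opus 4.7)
The plan is to rephrase the problem as solving the equation $h(\xi,\la)=0$ (with $h$ as in \eqref{h def}) in a neighborhood of $(0,0)\in C^{m+1,\al}(\pa\Om)\times\La$, and then to apply the implicit function theorem. Indeed, for $\xi$ small enough, the map $\id+\xi n:\pa\Om\to\pa\Om_\xi$ is a $C^{m+1,\al}$-diffeomorphism, so $\Om_\xi$ is a critical shape for $\cJ(\cdottone,\la)$ (equivalently, $g_{\Om_\xi}(\la)\equiv 0$ on $\pa\Om_\xi$) if and only if $h(\xi,\la)=0$ in $Y$. Producing a unique $\widetilde\xi(\la)$ solving this equation is thus exactly the conclusion of the lemma.

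The first hypothesis, $h(0,0)=g_\Om(0)=0$, is the criticality condition (i) in Definition \ref{def nondeg1}. The bulk of the work lies in verifying the second hypothesis, namely that the partial Fr\'echet derivative $\pa_\xi h(0,0):C^{m+1,\al}(\pa\Om)\to Y$ is a bijection; I would do this by identifying $\pa_\xi h(0,0)$ with the operator $Q$ from \eqref{Q}, whose bijectivity is then supplied by the nondegeneracy condition (ii). To compute $\pa_x j_\Om(\xi,0)[\eta]$ for small $\xi$, I would write
\begin{equation*}
\Om_{\xi+t\eta}=\Phi_t(\Om_\xi)\quad\text{with}\quad \Phi_t=\id+tE\eta\circ(\id+E\xi)^{-1},
\end{equation*}
apply Corollary \ref{corol structure} at $\Om_\xi$, and change variables from $\pa\Om_\xi$ back to $\pa\Om$. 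This yields an expression of the form
\begin{equation*}
\pa_x j_\Om(\xi,0)[\eta]=\int_{\pa\Om} h(\xi,0)(x)\left[n(x)\cdot n_{\Om_\xi}((\id+\xi n)(x))\right] J_\xi(x)\,\eta(x)\,dS(x),
\end{equation*}
where $J_\xi$ denotes the surface Jacobian of the change of variables. Differentiating in $\xi$ at $\xi=0$ and comparing with \eqref{Q}, all contributions coming from differentiating $J_\xi$ or the inner product of the two normal fields vanish, since each is multiplied by $h(0,0)=g_\Om(0)\equiv 0$; only the derivative of $h$ itself survives, giving $Q(\xi)=\pa_\xi h(0,0)[\xi]$.

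With both hypotheses in place, the implicit function theorem furnishes open neighborhoods $X'$ of $0\in C^{m+1,\al}(\pa\Om)$ and $\La'$ of $0\in\La$, together with a Fr\'echet differentiable map $\widetilde\xi:\La'\to X'$ such that $h(\widetilde\xi(\la),\la)=0$ for all $\la\in\La'$, and such that $(\xi,\la)\in X'\times\La'$ solves $h(\xi,\la)=0$ if and only if $\xi=\widetilde\xi(\la)$. By the reformulation in the first paragraph, this is precisely the conclusion of the lemma.

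The main obstacle is the identification $Q=\pa_\xi h(0,0)$: the structure theorem delivers only first-order information, so extracting second-order data requires differentiating a boundary derivative over a moving reference manifold, and one must be careful to realize perturbations of $\Om_\xi$ as ambient deformations rather than just Hadamard perturbations. The saving grace is that all the unpleasant geometric cross-terms (derivatives of the normal field and of the surface Jacobian at $\xi=0$) are silently annihilated by the criticality condition $g_\Om(0)\equiv 0$, leaving the clean identity above.
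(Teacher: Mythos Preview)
Your proposal is correct and follows essentially the same route as the paper. The paper itself defers the proof of Lemma~\ref{lem I} to \cite[Theorem I]{cava nondegenerate}, but the key computation you outline---the identification $Q=\pa_\xi h(0,0)$ via the formula $\pa_x j_\Om(\xi,\la)[\eta]=\int_{\pa\Om} h(\xi,\la)\,m(\xi)\,\eta$ with $m(0)=1$ and $h(0,0)=0$---is carried out verbatim in the paper as Proposition~\ref{Q li seme}, and the application of the implicit function theorem to $h$ is exactly what remains once one strips the barycenter component from the proof of Lemma~\ref{lem II}.
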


In what follows we will discuss how to modify Definition \ref{def nondeg1} and Lemma \ref{lem I} to deal with parametrized shape functionals of the form $\cJ:\cC^{m,\al}_\star\times \La\to\RR$. 
For any given $\om\in \cC^{m+1,\al}_\star$, and $\xi\in\C^{m+1,\al}(\pa\om)$, set
\begin{equation}\label{bar}
b_\om(\xi):= {\rm Bar}(\om_\xi)= \frac{\int_{\om_\xi} x\, dx}{|\om_\xi|}. 
\end{equation}
It is well known that the function $b_\om$ is well-defined and Fr\'echet differentiable in a neighborhood of $0\in C^{m+1,\al}(\pa\om)$.  
For any given $\om\in\cC^{m+1,\al}_\star$, we define the space of perturbations $C^{m+1,\al}_\star(\pa\om)$ that do not alter the barycenter of $\om$ at first order:
\begin{equation*}
C^{m+1,\al}_\star(\pa\om):= 
\ker b_\om'(0) =\setbld{\xi\in C^{m+1,\al}(\pa\om) }{b_\om'(0)[\xi]=0}. 
\end{equation*}
We stress that the elements of $C^{m+1,\al}_\star(\pa\om)$ only satisfy the barycenter constraint at \emph{first order}. In other words, for $\xi\in C^{m+1,\al}_\star(\pa\om)$, the perturbed set $\om_\xi$ does not necessarily belong to $\cC^{m,\al}_\star$. Therefore, in order to define the shape derivative of a shape functional $J:\cC^{m,\al}_\star\times \La\to\RR$, we will first need to project $\om_\xi$ back to the constraint space $\cC^{m,\al}_\star$ by  
\begin{equation*}
\om\mapsto  
\om-{\rm Bar}(\om) 
\end{equation*}
By the construction above, any given shape functional $\cJ:\cC^{m,\al}_\star\times \La\to\RR$ admits (and is uniquely identified by) a translation-invariant extension (here, denoted by the same letter) $\cJ: \cC^{m,\al}\times \La\to\RR$, defined as 
\begin{equation}\label{extension}
\cJ(\om,\la):=\cJ(\om-{\rm Bar}(\om),\la) \quad\tfor \om\in\cC^{m,\al}\setminus\cC^{m,\al}_\star. 
\end{equation}
In particular, for $\om\in\cC^{m+1,\al}$, we can define the map $j_\om: C^{m+1,\al}(\pa\om)\times \La\to \RR$ as in \eqref{J_om def}. Analogously, assume that $j_\om$ is well-defined and Fr\'echet differentiable in a neighborhood of $(0,0)\in C^{m+1,\al}(\pa\om)\times\La$ and define the map $g_\om:\La\to L^2(\pa\om)$ as in \eqref{g_om}--\eqref{first derivative}. Finally, suppose that the mapping $h$ defined in \eqref{h def} is well-defined and Fr\'echet differentiable and let $Q:C^{m+1,\al}(\pa\Om)\to Y$  denote the bounded linear operator defined as in \eqref{Q}. 

We are now ready to state the definition of nondegenerate critical shape for shape functionals of the form $\cJ:\cC^{m,\al}_\star\times \La\to\RR$ as follows. 

\begin{definition}[Nondegenerate critical shape for restricted shape functionals]\label{def nondeg2}
Let the notation be as above. We say that $\Om\in\cC^{m+2,\al}_\star$ is a \emph{nondegenerate} critical shape for 
$\cJ:\cC^{m,\al}_\star\times \La\to\RR$ at $\la=0$ if the following two conditions hold:
\begin{enumerate}[label=(\roman*)]
\item $\pa_x j_\Om(0,0)[\xi]= 0$ for all $\xi\in C^{m+1,\al}_\star(\pa\Om)$. In other words, the function $g_\Om(0)$ belongs to the orthogonal complement of $C^{m+1,\al}_\star(\pa\Om)$ in $L^2(\pa\Om)$ (\emph{criticality});
\item the restriction $\restr{ Q}{C^{m+1,\al}_\star(\pa\Om)}:C^{m+1,\al}_\star(\pa\Om)\to Y$ is a bijection (\emph{nondegeneracy}).
\end{enumerate}
\end{definition}


The following result shows the link between nondegeneracy and existence of a uniquely defined parametrized family of critical shapes for small $\la$. 

\begin{mainlem}\label{lem II}
Suppose that $\Om\in\cC^{m+2,\al}_\star$ is a nondegenerate critical shape (according to Definition \ref{def nondeg2}) for a shape functional $\cJ:\cC^{m,\al}_\star\times\La\to\RR$ at $\la=0$. Then, there exist open neighborhoods $X'$ of $0\in C^{m+1,\al}(\pa\Om)$ and $\La'$ of $0\in \La$ and a Fr\'echet differentiable map $\widetilde\xi: \La'\to X'$ such that the set $\Om_{\widetilde\xi(\la)}$ belongs to $\cC^{m+1,\al}_\star$ and is a critical shape for the shape functional $\cJ(\cdottone, \la)$. Moreover, for any pair $(\xi,\la)\in X'\times \La'$ such that ${\rm Bar}(\Om_{\xi})=0$ (that is, $\Om_{\xi}\in\cC^{m+1,\al}_\star$), the set $\Om_{\xi}$ is a critical shape for $\cJ(\cdottone,\la)$ if and only if $\xi=\widetilde\xi(\la)$.
\end{mainlem}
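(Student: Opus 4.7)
The plan is to apply the implicit function theorem to a map $F\colon C^{m+1,\al}(\pa\Om)\times\La\to Y\times\RR^N$ that encodes simultaneously the criticality condition and the barycenter constraint. The key preliminary observation is that, by construction, the extension \eqref{extension} is translation invariant: applying Corollary~\ref{corol structure} to the constant perturbation field $\varphi\equiv v\in\RR^N$ (which generates the flow $\om\mapsto\om+tv$), one gets $\int_{\pa\om}g_\om(\la)(v\cdot n_\om)\,dS=0$ for every $v\in\RR^N$, i.e. $g_\om(\la)\perp\spn\{n_\om\cdot e_i\}_{i=1}^N$ in $L^2(\pa\om)$. Combining this with criticality (which, unpacked, places $g_\Om(0)$ in $\spn\{x_i|_{\pa\Om}\}_{i=1}^N$, since that is the $L^2$-orthogonal complement of $C^{m+1,\al}_\star(\pa\Om)$) and with the divergence identity $\int_{\pa\Om}x_i\,(n_\Om\cdot e_j)\,dS=\delta_{ij}|\Om|$ (valid because ${\rm Bar}(\Om)=0$), one deduces $g_\Om(0)=0$; in particular $h(0,0)=0$.

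Next, I would set $F(\xi,\la):=(h(\xi,\la),\,b_\Om(\xi))$. The above gives $F(0,0)=(0,0)$, and Fr\'echet differentiability is inherited from $h$ and $b_\Om$. The partial derivative in the first slot is
\begin{equation*}
\pa_\xi F(0,0)[\eta]=(Q\eta,\,b_\Om'(0)[\eta]),\qquad \eta\in C^{m+1,\al}(\pa\Om).
\end{equation*}
To show this is a bijection onto $Y\times\RR^N$, I would use the topological direct sum $C^{m+1,\al}(\pa\Om)=C^{m+1,\al}_\star(\pa\Om)\oplus V$, where $V$ is any $N$-dimensional complement on which $b_\Om'(0)$ is an isomorphism onto $\RR^N$ (for instance $V=\spn\{n_\Om\cdot e_i\}_{i=1}^N$, again via the same divergence identity). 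Injectivity: if $\pa_\xi F(0,0)[\eta_\star+\eta_V]=0$, the second component forces $\eta_V=0$ (since $b_\Om'(0)|_V$ is an iso), and then $Q\eta_\star=0$ together with the nondegeneracy assumption forces $\eta_\star=0$. Surjectivity: given $(y,v)\in Y\times\RR^N$, first solve $b_\Om'(0)[\eta_V]=v$ in $V$, then use nondegeneracy to solve $Q\eta_\star=y-Q\eta_V$ in $C^{m+1,\al}_\star(\pa\Om)$.

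With this bijectivity, the IFT supplies open neighborhoods $X'\ni 0$ and $\La'\ni 0$ and a $C^1$ map $\widetilde\xi\colon\La'\to X'$ such that the zero set of $F$ in $X'\times\La'$ is exactly the graph of $\widetilde\xi$. In particular $b_\Om(\widetilde\xi(\la))=0$, so $\Om_{\widetilde\xi(\la)}\in\cC^{m+1,\al}_\star$. For the final uniqueness statement, I would verify that within the slice $\{b_\Om(\xi)=0\}$, restricted criticality of $\Om_\xi$ for $\cJ(\cdot,\la)$ is equivalent to $h(\xi,\la)=0$: one direction is trivial, and the other direction repeats the translation-invariance plus divergence-theorem argument of the preliminary step, now applied to the set $\Om_\xi$ (which has zero barycenter by construction), showing that $g_{\Om_\xi}(\la)\in\spn\{x_i|_{\pa\Om_\xi}\}\cap\spn\{n_{\Om_\xi}\cdot e_i\}^\perp=\{0\}$.

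The main obstacle I expect is the preliminary step identifying $g_\Om(0)=0$: this is what makes the two-component map $F$ genuinely vanish at $(0,0)$ and turns restricted criticality into the equation $h(\xi,\la)=0$ on the slice $\{b_\Om(\xi)=0\}$. Once this collapse has been observed, the remainder is a clean IFT application with the bijectivity of $\pa_\xi F(0,0)$ a straightforward consequence of the hypothesised bijectivity of $Q|_{C^{m+1,\al}_\star(\pa\Om)}$.
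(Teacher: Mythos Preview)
Your proposal is correct and follows the same overall architecture as the paper: apply the implicit function theorem to the map $(\xi,\la)\mapsto(h(\xi,\la),\,b_\Om(\xi))$, after first establishing that $g_\Om(0)\equiv 0$ so that this map vanishes at the origin.

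There is, however, one genuine difference worth highlighting. To prove that $\pa_\xi F(0,0)=(Q,\,\be)$ is a bijection from $C^{m+1,\al}(\pa\Om)$ onto $Y\times\RR^N$, the paper first proves a separate lemma (Lemma~\ref{Q(x0n)}) stating that $Q(x_0\cdot n)=0$ for every $x_0\in\RR^N$; this takes roughly a page and relies on the reparametrization lemma together with the translation invariance of the extended functional. The paper then uses this to decouple the system $Q(\xi)=\eta$, $\be(\xi)=y_0$ into $Q(\xi_\star)=\eta$, $\be(\xi)=y_0$. Your argument sidesteps that lemma entirely: you treat injectivity and surjectivity by the standard block-triangular trick on the direct sum $C^{m+1,\al}_\star(\pa\Om)\oplus V$, absorbing $Q\eta_V$ into the right-hand side rather than proving it vanishes. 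This is cleaner and shows that Lemma~\ref{Q(x0n)} is not actually needed for Lemma~\ref{lem II}. Conversely, the paper's route yields the additional structural information $\ker Q\supset\{x_0\cdot n:x_0\in\RR^N\}$, which clarifies why the unrestricted $Q$ can never be injective for translation-invariant functionals and hence why passing to $C^{m+1,\al}_\star(\pa\Om)$ is natural.

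Two minor remarks. First, when you write $\pa_\xi F(0,0)[\eta]=(Q\eta,\be(\eta))$, you are using the identification $Q=\pa_x h(0,0)$, which is the content of Proposition~\ref{Q li seme} in the paper; it relies on $g_\Om(0)=0$, so the order of your argument is right but you might make the dependence explicit. Second, the paper does not spell out the final equivalence ``restricted criticality of $\Om_\xi$ with $\mathrm{Bar}(\Om_\xi)=0$'' $\Leftrightarrow$ ``$h(\xi,\la)=0$'' as carefully as you do; your explicit repetition of the translation-invariance/divergence argument at $\Om_\xi$ is a welcome clarification.
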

\begin{remark}
The definition of \emph{nondegeneracy} of a critical point given in Definitions \ref{def nondeg1}--\ref{def nondeg2} can be thought of as a generalization of that used by Smale, Palais and Tromba in \cite{smale, Palais 69, tromba}. Moreover, by considering $Y$ to be a (possibly) larger space than $C^{m+1,\al}(\pa\Om)$ (respectively $C^{m+1,\al}_\star(\pa\Om)$), we can take into account the ``derivative loss" that usually occurs when dealing with shape derivatives.
\end{remark}

\begin{remark}
If equations
\begin{equation*}
Q(\xi)=\eta,\quad \restr{ Q}{C^{m+1,\al}_\star(\pa\Om)}(\xi)=\eta
\end{equation*}
for $\xi,\eta$ in the appropriate Banach spaces, satisfy the Fredholm alternative (as it is often the case when one considers shape functionals that depend on the solutions of elliptic boundary value problems), then the nondegeneracy assumptions $(ii)$ of Definitions \ref{def nondeg1}-\ref{def nondeg2} simply become $\ker Q=\{0\}$ and $\ker \restr{ Q}{C^{m+1,\al}_\star(\pa\Om)}=\{0\}$ respectively.
\end{remark}

In practice, the following result will be a useful tool to show the nondegeneracy of a critical shape $\Om$. 

\begin{proposition}
Let X denote either $C^{m+1,\al}(\pa\Om)$ or $C_\star^{m+1,\al}(\pa\Om)$. Suppose that $X$ is compactly embedded in $Y$ via the inclusion mapping $\iota: X\hookrightarrow Y$ and that for some $\mu\in\RR$ the map $Q+\mu\iota: X\to Y$ is a bijection. Then the following hold.
\begin{enumerate}[label=(\roman*)]
    \item Let $\cS\subset\RR$ denote the set of all real numbers $\la$ such that the following equation admits a nontrivial solution $u\ne 0$ in $X$:
    \begin{equation}\label{eigenequation}
        Qu=\la u.
    \end{equation}
    Then $\cS$ is a countable set $\{\la_n\}_n$, $\displaystyle \lim_{n\to\infty}|\la_n|=\infty$, and the space of solutions of \eqref{eigenequation} is finite-dimensional for all $\la=\la_n\in\cS$. 
    \item $\Om$ is non-degenerate (according to either Definition \ref{def nondeg1} or \ref{def nondeg2}) if and only if $0\notin \cS$.
\end{enumerate}
\end{proposition}
\begin{proof}
Let us first show $(i)$. Consider the operator $K:=(Q+\mu\iota)^{-1}:Y\to X\hookrightarrow Y$. By construction, $K$ is a compact operator from $Y$ into itself. Thus, by the spectral theorem \cite[Theorem 5.5]{GT}, $K$ possesses a countable set of eigenvalues $\{\La_n\}_{n\in\NN}$ with $0$ as an accumulation point, and the corresponding eigenspaces are finite-dimensional. In other words, there exists a sequence of nonzero functions $\{u_n\}_{n\in\NN}$ in $X\setminus \{0\}$ such that $Ku_n=\La_n u_n$. Recalling that $K=(Q+\mu\iota)^{-1}$ and rearranging the terms in this equation yield the desired
\begin{equation*}
Q u_n = \underbrace{(1/\La_n-\mu)}_{=: \la_n} u_n,
\end{equation*}
which holds for all $\La_n\ne 0$.

Let us now show $(ii)$. To this end, we will first show that the equation $(\id-\mu K)u=0$ has a nonzero solution $u\in Y\setminus\{0\}$ if and only if $0\in\cS$. Indeed, for $u\in Y\setminus\{0\}$ we have:
\begin{equation*}
u=\mu K u \iff (\mu \iota + Q)u=\mu u \iff Qu =0 \iff 0\in \cS. 
\end{equation*}
The above can be rephrased as follows: $0\notin \cS$ if and only if $u=0$ is the only solution to $(\id-\mu K)u=0$ in $Y$. 
Thus, by the Fredholm alternative (Riesz--Schauder theory) \cite[Theorem 5.3]{GT}, $0\notin \cS$ if and only if $\id-\mu K $ admits a continuous inverse $T:Y\to Y$. Now, for $u,y\in Y$ we have
\begin{equation*}
Qu=y \iff \mu u + Qu = \mu u + y \iff
u= K(\mu  u + y) \iff
(\id - \mu K )u= Ky \iff u= TK y.
\end{equation*}
In other words, $0\notin \cS$ if and only if $Q: X\to Y$ admits a continuous inverse, which is given by $Q^{-1}=TK$. 
\end{proof}

\section{Proof of Lemma \ref{lem II}}\label{sec 5}

Lemmas \ref{lem I}-\ref{lem II} are one of the main ingredients in the proof of Theorems \ref{mainthm I}-\ref{mainthm II}. The result of Lemma \ref{lem I} was originally stated and proved in \cite[Theorem I]{cava nondegenerate}, so, here, we will just give a proof of Lemma \ref{lem II}.

Before proving Lemma \ref{lem II}, some preliminary work is needed. In what follows, let us consider a fixed open set $\Om\in\cC^{m+2,\al}_\star$ and a parametrized shape functional $\cJ:\cC_\star^{m,\al}\times\La\to\RR$ (when no confusion arises, $\cJ$ will also denote its translation-invariant extension to $\cC^{m,\al}\times\La$). For simplicity, let $\be:C^{m+1,\al}(\pa\Om)\to\rn$ denote the Fr\'echet derivative at $\xi=0$ of the barycenter function $\xi\mapsto b_\Om(\xi)$ defined in \eqref{bar}, that is
\begin{equation*}
    \be(\xi):= b_\Om'(0)[\xi]= \frac{1}{|\Om|} \int_{\pa\Om} x \xi(x)\ dS_x, \quad \text{for all }\xi\in C^{m+1,\al}(\pa\Om).
\end{equation*}
Making use of $\be$, we define the following projection mapping:
\begin{equation*}
\pi_\star(\xi):=\xi- \be(\xi)\cdot n.
\end{equation*}
We have the following result concerning $\pi_\star$:
\begin{proposition}\label{projection}
The mapping $\pi_\star: C^{m+1,\al}(\pa\Om) \to C^{m+1,\al}_\star(\pa\Om)$ is a bounded linear projection. 
\end{proposition}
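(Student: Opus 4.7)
The plan is to verify, in order, the four ingredients packed into the statement: (a) $\pi_\star$ is well-defined as a map into $C^{m+1,\al}(\pa\Om)$; (b) it is linear; (c) it is bounded; (d) its image actually lies in the subspace $C^{m+1,\al}_\star(\pa\Om)$ and it acts as the identity there, so that $\pi_\star^2=\pi_\star$.

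Items (a), (b), (c) are straightforward. Since $\Om\in\cC^{m+2,\al}_\star$, the outward unit normal $n$ belongs to $C^{m+1,\al}(\pa\Om,\rn)$; combined with the obvious estimate
\begin{equation*}
|\be(\xi)|\le \tfrac{1}{|\Om|}\|x\|_{L^\infty(\pa\Om)}|\pa\Om|\,\|\xi\|_{L^\infty(\pa\Om)}\le C\,\|\xi\|_{C^{m+1,\al}(\pa\Om)},
\end{equation*}
this immediately gives $\pi_\star(\xi)=\xi-\be(\xi)\cdot n\in C^{m+1,\al}(\pa\Om)$, with $\|\pi_\star(\xi)\|_{C^{m+1,\al}}\le (1+C\,\|n\|_{C^{m+1,\al}})\,\|\xi\|_{C^{m+1,\al}}$. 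Linearity is automatic because $\be$ is a Fr\'echet derivative of a smooth map.

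The one nontrivial point is that $\pi_\star(\xi)$ really does satisfy the barycenter constraint at first order, i.e.\ $\be(\pi_\star(\xi))=0$. This reduces, by linearity of $\be$, to showing that the $\rn\to\rn$ linear map
\begin{equation*}
A v:=\be(v\cdot n)=\tfrac{1}{|\Om|}\int_{\pa\Om}x\,\bigl(v\cdot n(x)\bigr)\,dS_x
\end{equation*}
equals the identity. Writing $A_{ij}=\tfrac{1}{|\Om|}\int_{\pa\Om}x_i n_j\,dS_x$ and applying the divergence theorem to the vector field $x_i e_j$ gives $A_{ij}=\tfrac{1}{|\Om|}\int_\Om \pa_j x_i\,dx=\de_{ij}$, so $A=\id$ and therefore $\be(\pi_\star(\xi))=\be(\xi)-A\be(\xi)=0$. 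This is the key (and only genuinely geometric) step; I anticipate no real obstacle beyond keeping the tensorial bookkeeping straight.

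Finally, for the projection identity, note that by definition any $\eta\in C^{m+1,\al}_\star(\pa\Om)$ satisfies $\be(\eta)=0$, hence $\pi_\star(\eta)=\eta$; applied to $\eta=\pi_\star(\xi)$, which lies in $C^{m+1,\al}_\star(\pa\Om)$ by the previous paragraph, this yields $\pi_\star^2=\pi_\star$, completing the proof.
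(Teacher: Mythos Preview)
Your proof is correct and follows the same overall architecture as the paper: linearity and boundedness are dismissed as immediate, and the substantive point is the identity $\be(v\cdot n)=v$ for all $v\in\rn$, from which $\be(\pi_\star(\xi))=0$ follows by linearity.

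The one genuine difference is how you establish $\be(v\cdot n)=v$. The paper obtains it by differentiating the relation ${\rm Bar}(\Om+ty_0)=ty_0$ at $t=0$ and invoking the structure result for shape derivatives (Corollary~\ref{corol structure}); you instead compute the matrix $A_{ij}=\tfrac{1}{|\Om|}\int_{\pa\Om}x_i n_j\,dS$ directly via the divergence theorem applied to $x_i e_j$. Your route is more elementary and entirely self-contained---it does not appeal to the shape-derivative machinery built up in Section~\ref{sec 3}---whereas the paper's route is a nice illustration of that machinery in action and reinforces the reader's intuition for Corollary~\ref{corol structure}. Both arguments are short, and both yield the same identity; yours has the minor advantage of not needing ${\rm Bar}(\Om)=0$ for this step (that hypothesis is already baked into the explicit formula for $\be$ that both proofs take as given). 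You also spell out the idempotence $\pi_\star^2=\pi_\star$, which the paper leaves implicit.
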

\begin{proof}
Linearity and boundedness (that is, continuity) immediately follow from the definition of $\pi_\star$. It just remains to show that, for all $\xi\in C^{m+1,\al}(\pa\Om)$, $\be\left(\pi_\star(\xi)\right)=0$. To this end, take an arbitrary point $y_0\in\rn$. Now, for $t\in\RR$, we have
\begin{equation*}
{\rm Bar}(\Om+ty_0)=ty_0. 
\end{equation*}
Differentiating both members with respect to $t$ at $t=0$ (using Corollary \ref{corol structure} for the left hand side) yields
\begin{equation}\label{identity}
\be(y_0\cdot n)=y_0.
\end{equation}
Finally, by the linearity of $\be$ and \eqref{identity} with $y_0=\be(\xi)$, we conclude that
\begin{equation*}
\be\left( \pi_\star(\xi) \right)= \be\left( \xi - y_0\cdot n\right)=
\be\left( \xi \right) - \be(y_0\cdot n)= \be(\xi)-y_0=0,
\end{equation*}
which is what we wanted to show.
\end{proof}

\begin{proposition}
Let $\Om\in\cC^{m+2,\al}_\star$ be a critical shape (according to Definition \ref{def nondeg2}) for the parametrized shape functional $\cJ:\cC^{m,\al}_\star\times\La\to\RR$. Then $g_\Om(0)\equiv0$ on $\pa\Om$.
\end{proposition}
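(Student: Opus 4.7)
The plan is to upgrade the criticality condition from Definition \ref{def nondeg2}, which only gives orthogonality of $g_\Om(0)$ to the subspace $C^{m+1,\al}_\star(\pa\Om)$, to full orthogonality to all of $C^{m+1,\al}(\pa\Om)$, and then conclude by density. The extra orthogonality will come from the translation invariance of the canonical extension of $\cJ$ defined in \eqref{extension}.

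First, I would fix an arbitrary constant vector $y_0 \in \rn$ and consider the family of pure translations $\Phi(t) = \id + t y_0$. By the translation invariance built into the extension \eqref{extension}, we have $\cJ(\Phi(t)(\Om), 0) = \cJ(\Om,0)$ for all $t\in\RR$. Differentiating in $t$ at $t=0$ and applying Corollary \ref{corol structure} (with $\varphi \equiv y_0$) yields
\begin{equation*}
0 = \restr{\frac{d}{dt} \cJ\bigl(\Phi(t)(\Om),0\bigr)}{t=0} = j_\Om'(0)\bigl[y_0 \cdot n\bigr] = \int_{\pa\Om} g_\Om(0)\, (y_0\cdot n)\, dS.
\end{equation*}
Since $y_0 \in \rn$ was arbitrary, $g_\Om(0)$ is $L^2$-orthogonal to every element of the finite-dimensional subspace $\{y_0\cdot n : y_0 \in \rn\}$.

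Next, I would use the projection $\pi_\star$ from Proposition \ref{projection} to decompose an arbitrary $\xi \in C^{m+1,\al}(\pa\Om)$ as
\begin{equation*}
\xi = \pi_\star(\xi) + \be(\xi)\cdot n,
\end{equation*}
where $\pi_\star(\xi) \in C^{m+1,\al}_\star(\pa\Om)$ and $\be(\xi)\in\rn$. The criticality condition $(i)$ in Definition \ref{def nondeg2} gives $\int_{\pa\Om} g_\Om(0)\, \pi_\star(\xi) = 0$, while the translation-invariance step above (applied with $y_0 = \be(\xi)$) gives $\int_{\pa\Om} g_\Om(0)\,(\be(\xi)\cdot n) = 0$. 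Adding these two identities shows
\begin{equation*}
\int_{\pa\Om} g_\Om(0)\, \xi\, dS = 0 \quad \text{for all } \xi \in C^{m+1,\al}(\pa\Om).
\end{equation*}

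Finally, since $C^{m+1,\al}(\pa\Om)$ is dense in $L^2(\pa\Om)$ and $g_\Om(0) \in L^2(\pa\Om)$, this forces $g_\Om(0) \equiv 0$ on $\pa\Om$. The only mildly delicate point is making sure that the translation invariance of the \emph{extended} functional \eqref{extension} is what justifies applying Corollary \ref{corol structure} to pure translations even though $\Om + ty_0 \notin \cC^{m,\al}_\star$ for $t\neq 0$; this is precisely the reason the extension was set up to be translation invariant in the first place, so no additional work is required there.
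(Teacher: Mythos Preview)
Your proof is correct and takes essentially the same approach as the paper: both arguments combine the criticality hypothesis on $C^{m+1,\al}_\star(\pa\Om)$ with the translation invariance of the extended functional to cover the complementary directions $\{y_0\cdot n\}$, using the decomposition $\xi=\pi_\star(\xi)+\be(\xi)\cdot n$. The only cosmetic difference is that the paper handles both pieces simultaneously by translating $\Om_{t\xi}$ back to the origin and applying Corollary~\ref{corol structure} once (obtaining $j_\Om'(0,0)[\pi_\star(\xi)]=0$ directly), whereas you split $\xi$ first and treat each summand separately.
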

\begin{proof}
Let $\xi\in C^{m+1,\al}(\pa\Om)$. We will compute the Fr\'echet derivative $\pa_x j(0,0)[\xi]$ as a G\^ateaux derivative. We have
\begin{equation*}
\int_{\pa\Om}g_\Om(0)\ \xi = \pa_x j(0,0)[\xi]=\restr{\frac{d}{dt}}{t=0} j(t\xi,0)= \restr{\frac{d}{dt}}{t=0} \cJ(\Om_{t\xi},0)=\restr{\frac{d}{dt}}{t=0} \cJ\left(\widetilde\Om_{t\xi},0 \right),
\end{equation*}
where we have set $\widetilde\Om_{t\xi}:=\Om_{t\xi}-{\rm Bar}(\Om_{t\xi})$. Notice that $\pa\widetilde\Om_{t\xi}=\Phi(t)(\pa\Om)$, where $\Phi(t):\rn\to\rn$ is a smooth map such that 
\begin{equation*}
\restr{\Phi(t)}{\pa\Om}= \id+t\xi n -t\be(\xi) + o(t) \quad \tas t\to 0.
\end{equation*}
Furthermore, notice that the normal component of the first order term of the perturbation above is given by $\varphi:=\xi-\be(\xi) \cdot n=\pi_\star(\xi)$. In particular, $\varphi\in C^{m+1,\al}_\star(\pa\Om)$ by Proposition \ref{projection}. Thus, Corollary \ref{corol structure} implies
\begin{equation*}
\restr{\frac{d}{dt}}{t=0} \cJ\left(\widetilde\Om_{t\xi},0 \right)= j_\Om'(0,0)[\pi_\star(\xi)]=0,
\end{equation*}
where, in the last equality, we made use of the fact that $j_\Om'(0,0)[\cdottone]\equiv 0$ in $C^{m+1,\al}_\star(\pa\Om)$ by hypothesis (criticality). Since $\xi\in C^{m+1,\al}(\pa\Om)$ is arbitrary, the claim follows.
\end{proof}

\begin{proposition}\label{Q li seme}
For all $\xi\in C^{m+1,\al}(\pa\Om)$, we have 
$Q(\xi)=\pa_x h(0,0)[\xi]$.
\end{proposition}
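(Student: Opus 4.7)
The plan is to derive an integral formula for the first partial derivative $\pa_x j_\Om(\xi,0)[\eta]$, valid for $\xi$ in a neighborhood of $0\in C^{m+1,\al}(\pa\Om)$, in which $h(\xi,0)$ appears explicitly as a factor, and then to differentiate that formula in $\xi$ at $\xi=0$. The previous proposition ensures $g_\Om(0)\equiv 0$ on $\pa\Om$, hence $h(0,0)\equiv 0$, and this should kill every term in the product rule except the one where the derivative lands on $h$, producing exactly $\int_{\pa\Om} \pa_x h(0,0)[\xi]\,\eta$.

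I would first view $\Om_{\xi+t\eta}$ as a perturbation of $\Om_\xi$: for $\xi$ small, $\id+E\xi$ is a diffeomorphism of $\rn$, and the identity $\id+E(\xi+t\eta)=(\id+t\varphi_\xi)\circ(\id+E\xi)$ holds exactly, with $\varphi_\xi:=E\eta\circ(\id+E\xi)^{-1}$. Corollary \ref{corol structure}, applied at $\Om_\xi$, then yields
\[
\pa_x j_\Om(\xi,0)[\eta]=\restr{\frac{d}{dt}}{t=0}\cJ\bigl((\id+t\varphi_\xi)(\Om_\xi),\,0\bigr)=\int_{\pa\Om_\xi} g_{\Om_\xi}(0)\,(\varphi_\xi\cdot n_{\Om_\xi}).
\]
Pulling back to $\pa\Om$ through the diffeomorphism $\Phi_\xi:=\id+\xi n:\pa\Om\to\pa\Om_\xi$, and using that $\varphi_\xi\circ\Phi_\xi=\restr{E\eta}{\pa\Om}=\eta\,n$, the identity rewrites as
\[
\pa_x j_\Om(\xi,0)[\eta]=\int_{\pa\Om} h(\xi,0)\,\eta\,K(\xi)\,dS,
\]
where $K(\xi):=\bigl(n\cdot(n_{\Om_\xi}\circ\Phi_\xi)\bigr)\,J_\xi$ combines the change of unit normal and the surface Jacobian $J_\xi$ of $\Phi_\xi$; in particular $K(0)\equiv 1$.

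Differentiating this last display in $\xi$ at $\xi=0$ along an arbitrary direction (still denoted $\xi$), the product rule gives
\[
\pa^2_{xx} j_\Om(0,0)[\xi,\eta]=\int_{\pa\Om}\pa_x h(0,0)[\xi]\,\eta\,K(0)\,dS+\int_{\pa\Om} h(0,0)\,\eta\,\pa_x K(0)[\xi]\,dS;
\]
the second integral vanishes by the previous proposition ($h(0,0)\equiv 0$), and $K(0)\equiv 1$ leaves $\int_{\pa\Om}Q(\xi)\,\eta=\int_{\pa\Om}\pa_x h(0,0)[\xi]\,\eta$ for every $\eta\in C^{m+1,\al}(\pa\Om)$; density in $L^2(\pa\Om)$ then gives $Q(\xi)=\pa_x h(0,0)[\xi]$. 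The main obstacle I foresee is making this factorization and change of variables rigorous at the Banach-space level, in particular verifying that $\varphi_\xi\cdot n_{\Om_\xi}$ is regular enough to feed into the shape-derivative identity at $\Om_\xi$ and that $K$ is Fr\'echet differentiable at $\xi=0$; however, the cancellation provided by $g_\Om(0)\equiv 0$ makes the explicit form of $\pa_x K(0)[\xi]$ irrelevant, so only the value $K(0)\equiv 1$ is actually needed.
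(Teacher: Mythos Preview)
Your proposal is correct and follows essentially the same route as the paper: write $\pa_x j_\Om(\xi,\la)[\eta]$ as $\int_{\pa\Om} h(\xi,\la)\,m(\xi)\,\eta$ (the paper's $m(\xi)$ is your $K(\xi)$), then differentiate in $\xi$ at $(0,0)$ and use $h(0,0)=g_\Om(0)=0$ together with $m(0)=1$. One small correction to your last sentence: you do need Fr\'echet differentiability of $K$ at $\xi=0$ to justify the product rule (the paper invokes \cite[Proposition 5.4.14 and Lemma 5.4.15]{HP2018} for the normal and tangential Jacobian); what the cancellation buys you is that the \emph{value} of $\pa_x K(0)[\xi]$ is irrelevant, not its existence.
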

\begin{proof}
By hypothesis, the mapping $(\xi,\la)\mapsto j_\Om(\xi,\la)$ is Fr\'echet differentiable in a neighborhood of $(0,0)\in C^{m,\al}(\pa\Om)\times \La$. Computing the partial derivative with respect to the first variable at the point $(\xi,\la)\in C^{m+1,\al}(\pa\Om)\times\La$ in the direction $\eta\in C^{m+1,\al}(\pa\Om)$ with Corollary \ref{corol structure} at hand yields
\begin{equation*}
\pa_x j (\xi,\la)[\eta]= \restr{\frac{d}{dt}}{t=0} j(\xi+t\eta,\la)= 
 \int_{\pa\Om_{\xi}}  g_{\Om_{\xi}}(\la) \ \pp{\pp{\eta n}\circ \left(\id+\xi n\right)^{-1} \cdot n_{\xi}},
\end{equation*}
where $n_{\xi}$ denotes the outward unit normal vector to $\pa\Om_{\xi}$.
By a change of variables, the expression above can be rewritten as
\begin{equation}\label{pax j}
\pa_x j(\xi,\la)[\eta]= \int_{\pa\Om} h(\xi,\la)\ m(\xi)\ \eta,
\end{equation}
where
\begin{equation}\label{h&m}
h(\xi,\la):= g_{\Om_{\xi}}(\la) \circ \pp{\id+\xi n}, \quad m(\xi):= J_\tau(\xi)\ n_{\xi}\circ\pp{\id+\xi n}\cdot n,
\end{equation}
and $J_\tau(\xi)$ denotes the tangential Jacobian associated to the map $\id+ \xi n$ (see \cite[Definition 5.4.2 and Proposition 5.4.3]{HP2018}).
It is known (see \cite[Proposition 5.4.14 and Lemma 5.4.15]{HP2018}) that both the normal vector and the tangential Jacobian are Fr\'echet differentiable with respect to perturbations of class $C^1$. Moreover, by hypothesis, we know that also the mapping 
\begin{equation*}
(\xi,\la)\mapsto g_{\Om_{\xi}}(\la)\circ\pp{\id+\xi n}:=h(\xi,\la)\in Y    
\end{equation*} is well-defined and Fr\'echet differentiable in a neighborhood of $(0,0)\in C^{m+1,\al}(\pa\Om)\times \La$. 
By composition, both $h(\cdottone,\cdottone)$ and $m(\cdottone,\cdottone)$ are Fr\'echet differentiable in a neighborhood of $(0,0)\in X\times \La$. In particular, this implies that, for fixed $\eta$, also $\pa_x j(\cdottone,\cdottone)[\eta]$ is Fr\'echet differentiable in a neighborhood of $(0,0)\in C^{m+1,\al}(\pa\Om)\times \La$.
Differentiating now \eqref{pax j} with respect to the first variable one more time at the point $(0,0)$ yields
\begin{equation*}
\pa_{xx}^2\ j(0,0)[\xi,\eta]= \int_{\pa\Om} \pp{\pa_x h (0,0)[\xi]\ m(0,0)+h(0,0)\ \pa_x m(0,0)[\xi]}\ \eta = \int_{\pa\Om} \underbrace{\pa_x h(0,0)[\xi]}_{:=Q(\xi)}\ \eta,
\end{equation*}
where we have made use of the following identities:
\begin{equation*}
h(0,0)=g_\Om(0)=0,\quad m(0,0)= 1.
\end{equation*}
In other words, the bounded linear operator $Q$ defined in \eqref{Q} is nothing but $\pa_x h(0,0)$, as claimed.
\end{proof}

\begin{lemma}\label{Q(x0n)}
For all $x_0\in\rn$, the following holds:
\begin{equation*}
    Q(x_0\cdot n)=0.
\end{equation*}
\end{lemma}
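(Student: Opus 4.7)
The plan is to exhibit a smooth curve $t\mapsto\xi(t)\in C^{m+1,\al}(\pa\Om)$ with $\xi(0)=0$ and $\xi'(0)=x_0\cdot n$ along which the map $\xi\mapsto h(\xi,0)$ vanishes identically. Since $Q(\xi)=\pa_x h(0,0)[\xi]$ by Proposition \ref{Q li seme}, differentiating at $t=0$ would then give $Q(x_0\cdot n)=0$.

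The natural choice is $\xi(t):=\Psi(tx_0)$, obtained by applying the Reparametrization Lemma \ref{reparametrization} to the constant vector field $\te=tx_0\in\Theta^{m+1,\al}$. For $|t|$ small this produces a map $G(tx_0):\pa\Om\to\pa\Om$ (a bijection by Remark \ref{G is bij}) such that $(\id+tx_0)\circ G(tx_0)=\id+\xi(t)n$ on $\pa\Om$. In particular $\Om_{\xi(t)}=\Om+tx_0$, and Lemma \ref{reparametrization} also yields $\xi'(0)=x_0\cdot n$.

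The second ingredient is the translation-equivariance of the densities $g_\om(\la)$. Since the extension in \eqref{extension} makes $\cJ$ translation-invariant, a short computation from \eqref{first derivative}, together with the observation that a Hadamard perturbation of $\om+a$ in the direction ``$\xi$ translated by $-a$'' yields exactly $\om_\xi+a$, gives
\begin{equation*}
g_{\om+a}(\la)(y+a)=g_\om(\la)(y)\quad\text{for all }y\in\pa\om,\ a\in\rn.
\end{equation*}
Applied with $\om=\Om$, $a=tx_0$ and $\la=0$, this transforms the defining formula for $h$ into
\begin{equation*}
h(\xi(t),0)(y)=g_{\Om+tx_0}(0)\bigl((\id+\xi(t)n)(y)\bigr)=g_\Om(0)\bigl(G(tx_0)(y)\bigr),
\end{equation*}
where the last equality uses $(\id+\xi(t)n)(y)-tx_0=G(tx_0)(y)\in\pa\Om$. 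The preceding proposition established $g_\Om(0)\equiv 0$ on $\pa\Om$, hence $h(\xi(t),0)\equiv 0$ for all small $|t|$. Differentiating at $t=0$ and invoking Proposition \ref{Q li seme} closes the argument.

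The main potential obstacle is making the translation-equivariance of $g_\om(\la)$ rigorous, since one must carefully track how the extension operator $E$ and the outward unit normal behave under translations. Once that is in hand, the rest is a one-line chain-rule computation exploiting the vanishing of $g_\Om(0)$.
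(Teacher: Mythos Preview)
Your proposal is correct and follows essentially the same route as the paper: both arguments use the Reparametrization Lemma to produce the curve $\xi(t)=\Psi(tx_0)$ with $\Om_{\xi(t)}=\Om+tx_0$ and $\xi'(0)=x_0\cdot n$, exploit the translation invariance of the extended functional $\cJ$ together with $g_\Om(0)\equiv 0$ to conclude $h(\xi(t),0)\equiv 0$, and then differentiate at $t=0$ via Proposition~\ref{Q li seme}. The only cosmetic difference is that the paper derives the vanishing of $h(\Psi(t),0)$ by comparing two first-order expansions of $\cJ(\Om_\eta+tx_0,0)$ in an auxiliary variable $\eta$ (which amounts to proving your translation-equivariance identity $g_{\om+a}(\la)(\,\cdot+a)=g_\om(\la)(\,\cdot\,)$ in weak form), whereas you state that identity pointwise and plug it directly into the definition of $h$; the ``potential obstacle'' you flag is exactly the computation the paper carries out, and it goes through without difficulty since the outward normal is translation-invariant and the set $\om_\xi$ depends only on the boundary values $\xi n_\om$, not on the choice of extension $E$.
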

\begin{proof}
Fix $\xi_0\in\rn$, $\eta\in C^{m+1,\al}(\pa\Om)$ and for small $t$, let $\Psi(t)\in C^{m+1,\al}(\pa\Om)$ and $G(t)\in C^{m+1,\al}(\pa\Om,\pa\Om) $ be the functions given by Lemma \ref{reparametrization} that satisfy
\begin{equation}\label{identity 2}
(\id+ tx_0)\circ G(t) = \id+ \Psi(t)n \quad \ton \pa\Om, \quad \Psi'(0)=x_0\cdot n
\end{equation}
Since $\Om_\eta+tx_0=(\Om+tx_0)_{\eta\circ(\id+tx_0)}$, we have the following expansion as $\eta\to 0$ in $C^{m+1,\al}(\pa\Om)$:
\begin{equation*}
\begin{aligned}
\cJ(\Om_\eta+tx_0,0)=\cJ\left( (\Om+tx_0)_{\eta\circ(\id+tx_0)}, 0\right)= \cJ(\Om+tx_0,0)+ \int_{\pa\Om+tx_0} g_{\Om+tx_0}(0) \ \eta\circ(\id-tx_0) + o(\eta) \\
= \cJ(\Om,0) + \int_{\pa\Om_{\Psi(t)}} g_{\Om_{\Psi(t)}}(0)\ \eta\circ(\id-tx_0) + o(\eta) = 
\cJ(\Om,0) + \int_{\pa\Om} h(\Psi(t),0)\ \eta \circ G(t) J_\tau(t) + o(\eta),
\end{aligned}
\end{equation*}
where we made use of \eqref{identity 2} in the last equality and let $J_\tau (t)$ denote the tangential Jacobian associated to the change of variables in the surface integral. A further change of variables, this time with respect to the mapping $G^{-1}(t):\pa\Om\to\pa\Om$, yields
\begin{equation}\label{one way}
\cJ(\Om_\eta+tx_0,0)= \cJ(\Om,0) + \int_{\pa\Om} h(\Psi(t),0)\circ G^{-1}(t) \ \eta\ \widetilde J_\tau(t) + o(\eta),
\end{equation}
where $\widetilde J_\tau(t)$ is the combined tangential Jacobian associated to the two previous changes of variables.

On the other hand, since $\cJ(\cdottone,0)$ is translation-invariant by construction, the following expansion as $\eta\to 0$ holds true as well
\begin{equation}\label{another way}
\cJ(\Om_\eta+tx_0,0)= \cJ(\Om_\eta,0)= \cJ(\Om)+ \int_{\pa\Om}g_\Om(0)\ \eta +o(\eta).
\end{equation}
Comparing the first order terms in \eqref{one way}-\eqref{another way} and recalling that the function $\eta\in C^{m+1,\al}(\pa\Om)$ was arbitrary yield
\begin{equation*}
h(\Psi(t),0)\circ G^{-1}(t) \widetilde J_\tau(t) = g_\Om(0)=0 \quad \ton\pa\Om.
\end{equation*}
Therefore, since $\widetilde J_\tau(t)$ is strictly positive on $\pa\Om$ and $G^{-1}(t)$ is a bijection, the identity above implies 
\begin{equation*}
    h\left(\Psi(t),0\right)\equiv 0\quad \ton\pa\Om, \quad \text{for $|t|$ small}. 
\end{equation*}
Finally, differentiating the above by $t$ at $t=0$, Proposition \ref{Q li seme} and the second identity in \eqref{identity 2} imply the desired identity 
\begin{equation*}
0=\pa_x h(0,0)\left[\Psi'(0)\right]=Q(x_0\cdot n). 
\end{equation*}
\end{proof}

In what follows we will give a proof of Lemma \ref{lem II}. One of the main ingredients is the following version of the implicit function theorem for Banach spaces (see \cite[Theorem 2.3, page 38]{AP1983} for a proof). 
\begin{thm}[Implicit function theorem]\label{ift}
Let $\cH\in\cC^k(X\times \La,Z)$, $k\ge1$, where $Z$ is a Banach space and $X$ (resp. $\La$) is an open set of a Banach space $\widetilde X$ (resp. $\widetilde \La$). Suppose that  $\cH(x^*,\la^*)=0$ and that the partial derivative $\pa_x\cH(x^*,\la^*)$ is a bounded invertible linear transformation from $X$ to $Z$. 

Then, there exist neighborhoods $\La'$ of $\la^*$ in $\widetilde \La$ and $X'$ of $x^*$ in $\widetilde X$, and a map $\xi\in\cC^k(\La',X')$ such that the following hold:
\begin{enumerate}[label=(\roman*)]
\item $\cH(\xi(\la),\la)=0$ for all $\la\in\La$,
\item If $\Psi(x,\la)=0$ for some $(x,\la)\in X'\times \La'$, then $x=\xi(\la)$,
\item $\xi'(\la)=-\left(\pa_x \cH(p) \right)^{-1}\circ \pa_\la \cH(p)$, where $p=(\xi(\la),\la)$ and $\la\in\La'$.
\end{enumerate}
\end{thm}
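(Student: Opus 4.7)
The plan is to prove the Banach-space implicit function theorem by the classical contraction-mapping method. By translating arguments I may assume $(x^*,\la^*)=(0,0)$; by post-composing with $T^{-1}$, where $T:=\pa_x\cH(0,0)$, I may further assume $Z=\widetilde X$ and $\pa_x\cH(0,0)=\id$. In this normalised setting, solving $\cH(x,\la)=0$ for $x$ is equivalent to finding a fixed point of the auxiliary map
\begin{equation*}
F(x,\la):=x-\cH(x,\la),
\end{equation*}
so the whole problem reduces to producing a parametrised family of fixed points depending smoothly on $\la$.

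First, establish the contraction. Since $\cH\in C^k$ with $k\ge 1$, the derivative $\pa_x F=\id-\pa_x\cH$ is continuous with $\pa_x F(0,0)=0$. Hence there exist $r,\rho>0$ so that $\|\pa_x F(x,\la)\|\le 1/2$ whenever $\|x\|\le r$ and $\|\la\|\le\rho$, which by the mean value inequality makes $F(\cdot,\la)$ a $\tfrac12$-Lipschitz self-map of the closed ball $B_r$ (after shrinking $\rho$ once more to ensure $\|F(0,\la)\|\le r/2$, so that $\|F(x,\la)\|\le\|F(x,\la)-F(0,\la)\|+\|F(0,\la)\|\le r$). The Banach fixed point theorem then supplies, for every $\la\in\La':=\{\la:\|\la\|<\rho\}$, a unique fixed point $\xi(\la)\in B_r$, giving both (i) and (ii) with $X':=\mathrm{int}\,B_r$.

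Next, show regularity. Continuity of $\xi$ follows from the contraction estimate
\begin{equation*}
\|\xi(\la_1)-\xi(\la_2)\|\le\tfrac12\|\xi(\la_1)-\xi(\la_2)\|+\|F(\xi(\la_2),\la_1)-F(\xi(\la_2),\la_2)\|,
\end{equation*}
which yields $\|\xi(\la_1)-\xi(\la_2)\|\le 2\|F(\xi(\la_2),\la_1)-F(\xi(\la_2),\la_2)\|$ and hence continuity of $\xi$ from continuity of $F$ in $\la$. For Fr\'echet differentiability, note that $\pa_x\cH(\xi(\la),\la)$ remains invertible near $(0,0)$, being a small perturbation of $\id$ in $\cL(\widetilde X,Z)$ (the set of invertible operators is open via the Neumann series). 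Expanding $0=\cH(\xi(\la+h),\la+h)-\cH(\xi(\la),\la)$ to first order in both variables and inverting $\pa_x\cH(\xi(\la),\la)$ yields
\begin{equation*}
\xi(\la+h)-\xi(\la)=-\pa_x\cH(\xi(\la),\la)^{-1}\pa_\la\cH(\xi(\la),\la)[h]+o(\|h\|),
\end{equation*}
which simultaneously establishes differentiability and formula (iii).

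The higher regularity $\xi\in C^k$ follows by induction. The right-hand side of (iii) is built from the partial derivatives of $\cH$ composed with $(\xi(\la),\la)$, together with the inversion map $A\mapsto A^{-1}$, which is $C^\infty$ on the open set of invertible operators in $\cL(\widetilde X,Z)$; so if $\xi\in C^j$ for some $j<k$, then (iii) exhibits $\xi'$ as a $C^j$ map of $\la$, whence $\xi\in C^{j+1}$. The main technical obstacle is precisely this bootstrap, and within it the verification that operator inversion is smooth with the expected chain-rule formulas; all other steps are routine consequences of the Banach fixed point theorem.
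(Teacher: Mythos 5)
Your proof is correct and is the standard contraction-mapping proof of the implicit function theorem in Banach spaces; the paper itself does not prove Theorem \ref{ift} but simply cites \cite[Theorem 2.3, page 38]{AP1983}, where essentially this argument is carried out. The only step you gloss over is the passage to a remainder of size $o(\norm{h})$ in the differentiability argument: the Fr\'echet expansion of $\cH$ a priori gives a remainder $o\left(\norm{\xi(\la+h)-\xi(\la)}+\norm{h}\right)$, so one must first upgrade your continuity estimate to a local Lipschitz bound $\norm{\xi(\la+h)-\xi(\la)}\le C\norm{h}$ (which does follow from that estimate, since $F(\xi(\la),\cdot)$ is $C^1$ and hence locally Lipschitz in $\la$).
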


\begin{proof}[Proof of Lemma \ref{lem II}]
The proof follows by applying the implicit function theorem (Theorem \ref{ift}) to the following functional defined in a small enough neighborhood $X\times \La$ of $(0,0)\in C^{m+1,\al}(\pa\Om)\times \La$:
\begin{eqnarray*}
\cH: X\times \La \longrightarrow Y\times \rn,\\
  (\xi,\la)\mapsto\left(h(\xi,\la), {\rm Bar}(\Om_\xi) \right).
\end{eqnarray*}
The functional $\cH$ verifies $\cH(0,0)=(0,0)$ by construction and is clearly Fr\'echet differentiable in a neighborhood of $(0,0)\in X\times \La$ because both its components are. Moreover, its partial Fr\'echet derivative with respect to the first variable is given by the map $\xi\mapsto \left(Q(\xi),\be(\xi)\right)$. It now suffices to show that this map is a bijection. To this end, take an arbitrary pair $(\eta,y_0)\in Y\times \rn$ and consider the equation 
\begin{equation}\label{equation}
Q(\xi)=\eta, \quad \be(\xi)=y_0.    
\end{equation}
The function $\xi$ can be decomposed as the sum $\xi_\star+\xi_\perp$, where $\xi_\star:=\pi_\star(\xi)\in C^{m+1,\al}_\star(\pa\Om)$ and $\xi_\perp:=\be(\xi)\cdot n$. 
By the linearity of $Q$ and Lemma \ref{Q(x0n)} with $x_0=\be(\xi)$, equation \eqref{equation} can be rewritten as 
\begin{equation*}
Q(\xi_\star)=\eta, \quad \be(\xi)=y_0.    
\end{equation*}
Thus, since the restriction $\restr{Q}{C^{m+1,\al}_\star(\pa\Om)}\to Y$ is a bijection by hypothesis, the equation above is uniquely solvable and its solution is given by 
\begin{equation*}
\xi=\xi_\star+\xi_\perp = Q^{-1}(\eta)+ y_0\cdot n.
\end{equation*}
The claim now follows from the implicit function theorem.
\end{proof}

\section{Putting all pieces together: proof of Theorems \ref{mainthm I}--\ref{mainthm II}}\label{sec 6}
Let $\Om\in\cC^{m+2,\al}$ (respectively, $\cC^{m+2,\al}_\star$) be a nondegenerate critical point for a shape functional $J:\cC^{m,\al}\to\RR$ (respectively, $J:\cC^{m,\al}_\star\to\RR$). Moreover, suppose that $J$ is invariant with respect to some one-parameter group of rotations $\{R_\vartheta\}_{\vartheta\in\RR}$. We can now define a parametrized shape functional by setting
\begin{eqnarray*}
\cJ(\om,\vartheta):= J(R_\vartheta(\om))    
\end{eqnarray*}
for all $\vartheta\in\RR$ and $\om\in\cC^{m,\al}$ (respectively $\cC^{m,\al}_\star$). Furthermore, if the shape functional $\cJ$ is defined only in $\cC^{m,\al}_\star\times\La$, then we will consider its extension (and still call it $\cJ$) to the whole $\cC^{m,\al}\times\La$ as done in \eqref{extension}.

Fix some $\vartheta\in(-\ve,\ve)$. By hypothesis, $\Om$ is a critical shape for $\cJ(\cdottone,0)$. We claim that $\Om$ is a critical shape for $\cJ(\cdottone,\vartheta)$ (that is, $g_{\Om}(\vartheta)\equiv 0$) as well. Indeed, for any $\xi\in C^{m+1,\al}(\pa\Om)$ the following chain of equalities holds true:
\begin{equation*}
\int_{\pa\Om}g_{\Om}(\vartheta)\ \xi=\restr{\frac{d}{dt}}{t=0} \cJ\left(\Om_{t\xi}, \vartheta \right)
= \restr{\frac{d}{dt}}{t=0} J\left(R_\vartheta(\Om_{t\xi})\right)
= \restr{\frac{d}{dt}}{t=0}J(\Om_{t\xi})= \int_{\pa\Om}g_\Om(0)\ \xi=0,
\end{equation*}
where, in the third equality, we used the fact that $J$ is invariant with respect to the group of rotations $\{R_\vartheta\}_{\vartheta\in\RR}$. Since $\xi$ was arbitrary, $g_\Om(\vartheta)\equiv 0$ as claimed. 
On the other hand, $R_{-\vartheta}(\Om)$ is also a critical shape for $\cJ(\cdottone,\vartheta)$. Indeed, for all $\xi\in C^{m+1,\al}(R_{-\vartheta}(\pa\Om))$ we have
\begin{equation*}
\left(R_{-\vartheta}(\Om)\right)_{t\xi}= R_{-\vartheta}\left(\Om_{t\xi\circ(R_{-\vartheta})}\right). 
\end{equation*}
In turn, this implies that 
\begin{equation*}
\begin{aligned}
\int_{R_{-\vartheta}(\pa\Om)} g_{R_{-\vartheta}(\Om)}(\vartheta)\ \xi 
= \restr{\frac{d}{dt}}{t=0}\cJ\left(\left(R_{-\vartheta}(\Om) \right)_{t\xi}  ,\vartheta \right)
= \restr{\frac{d}{dt}}{t=0} \cJ \left( R_{-\vartheta}\left(\Om_{t\xi\circ(R_{-\vartheta})}\right), \vartheta \right)\\
= \restr{\frac{d}{dt}}{t=0} J\left( \Om_{t\xi\circ(R_{-\vartheta})} \right)
=\int_{\pa\Om} g_\Om(0)\ \xi\circ(R_{-\vartheta})=0. 
\end{aligned}
\end{equation*}
Again, by the arbitrariness of $\xi$, we conclude that $g_{R_{-\vartheta}(\Om)}(\vartheta)\equiv 0$, as claimed. 

Let us briefly summarize what we have shown. For any $\vartheta\in\RR$, we have found two critical shapes for the shape functional $\cJ(\cdottone, \vartheta)$, namely $\Om$ and $R_{-\vartheta}(\Om)$. By Corollary \ref{corol rotations}, if $|\vartheta|$ is small enough, there exists a small function $\Psi(\vartheta)\in C^{m+1,\al}(\pa\Om)$ such that 
\begin{equation*}
R_{-\vartheta}(\Om)=\Om_{\Psi(\vartheta)}.     
\end{equation*}

Furthermore, by Lemma \ref{lem I} (respectively Lemma \ref{lem II}) there exist an open neighborhood $X'$ of $0\in C^{m+1,\al}(\pa\Om)$, a small positive real number $\ve>0$ and a $C^1$ map $\widetilde\xi: (-\ve,\ve)\to X'$ such that the set $\Om_{\widetilde\xi(\vartheta)}$ is a critical shape for the shape functional $\cJ(\cdottone, \vartheta)$. Moreover, for $(\xi,\vartheta)\in X'\times (-\ve,\ve)$, the set $\Om_{\xi}$ is a critical shape for $\cJ(\cdottone,\vartheta)$ if and only if $\xi=\widetilde\xi(\vartheta)$. In other words, for $|\vartheta|<\ve$, $\Om_{\widetilde\xi(\vartheta)}$ is the only critical shape for $\cJ(\cdottone,\vartheta)$. In turn, this implies that
\begin{equation*}
\Om=\Om_{\widetilde\xi(\vartheta)}= \Om_{\Psi(\vartheta)}=
\Om_{R_{-\vartheta}} \quad \tfor |\vartheta|<\ve.
\end{equation*}
Finally, for all $\vartheta>0$, there exist $k\in\NN\cup\{0\}$ and $\vartheta_1\in (0,\ve/2)$ such that $\vartheta= k \frac \ve 2 +\vartheta_1$. Thus,
\begin{equation*}
R_\vartheta(\Om)= \underbrace{R_{\ve/2}\circ \dots \circ R_{\ve/2}}_{\text{$k$ times}} \circ R_{\vartheta_1} (\Om) = \Om. 
\end{equation*}
The case $\vartheta<0$ then follows. We just showed that $\Om$ is invariant with respect to the subgroup $\{R_\vartheta\}_{\vartheta\in\RR}$, as claimed.

\section{An alternative take on Serrin's overdetermined problem and the Saint-Venant inequality}\label{sec 7}

Let $E(\om)$ denote the torsional rigidity of the open set $\om$, that is $E(\om):=\int_\om |\gr u|^2$, where $u$ is the solution to the following boundary value problem:
\begin{equation}\label{torsion}
-\De u=1 \quad \tin \om, \quad u=0\quad \ton\pa\om.    
\end{equation}
This defines a shape functional $E:\cC^{1,\al}\to\RR$. Moreover, it is known that the associated functional 
$e_\om(\xi):=E(\om_\xi)$ is well defined and Fr\'echet differentiable in a neighborhood of $0\in C^{1,\al}(\pa\om)$ for each $\om\in\cC^{2,\al}$. Let now $J: \cC^{2,\al} \to\RR$, 
\begin{equation}\label{lagrangian}
    J(\om):= E(\om)-\mu(|\om|-V_0).  
\end{equation}
This is the Lagrangian associated to the constrained maximization problem \ref{saint venant} (here $V_0$ is the value of the volume constraint and $\mu$ is the associated Lagrange multiplier). Moreover, if we set $j_\om(\xi):= J(\om_\xi)$ as before, a standard computation with the aid of the Hadamard formula (\cite[Theorem 5.2.2]{HP2018}) yields 
\begin{equation}\label{lagrangian der}
j_\om'(0)[\xi]= \int_{\pa\om} \left( |\gr u|^2 -\mu  \right) \xi.
\end{equation}
In other words, if $\Om$ is a critical shape for $J$, then we must have $|\gr u|^2\equiv \mu$ on $\pa\Om$, and thus $\Om$ is a solution to Serrin's overdetermined problem \ref{serrin}. It is known that, when the solution $u$ of \eqref{torsion} also satisfies $|\gr u|\equiv const$ on $\pa\Om$ even in some weak sense, then $\pa\Om$ is an analytic surface (see \cite{garofalo lewis, Vo, KN77}), so, in what follows, we will not care much about the regularity assumptions. 

Let us now compare problems \ref{saint venant} and \ref{serrin} in light of \eqref{lagrangian}-\eqref{lagrangian der}. If we set aside the assumptions on the regularity of $\Om$ we get:
\begin{enumerate}[label=\arabic*)]
    \item\label{sanv} Saint-Venant inequality: ``The ball (of volume $V_0$) is the only \textbf{maximizer} for the constrained maximization problem with Lagrangian $J$ among {\textbf{open sets}}." 
    \item\label{ser} Serrin's overdetermined problem: ``The ball (of volume $V_0$) is the only \textbf{critical shape} of $J$ among {\textbf{domains}}." 
\end{enumerate}
In other words, \ref{sanv} requires very strong assumptions on the variational behavior of $J$ at $\Om$ (namely, non-local ones, since it requires $\Om$ to be a \emph{global} maximizer), but makes no a priori assumptions on the connectedness of $\Om$. On the other hand, \ref{ser} just requires $\Om$ to be a critical shape, but connectedness is imposed. Indeed, as shape derivatives are local in nature, the family of critical shapes is closed under finite disjoint unions. In other words, the disjoint union of balls of volume $V_0$ is still a critical shape of $J$. In this sense, we can state that the connectedness assumption in \ref{ser} is sharp. 
As the following theorem shows, the same cannot be said for the assumption of $\Om$ being a global maximizer of \ref{sanv} 
\begin{theorem}
Let $\Om\in \cC^{3,\al}$ (in particular, $\Om$ is not necessarily connected) be a nondegenerate critical shape for the Lagrangian $J:\cC^{1,\al}_\star\to\RR$. Then $\Om$ is a ball. 
\end{theorem}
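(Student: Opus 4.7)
The plan is to apply Theorem \ref{mainthm II} with the group $\Ga=SO(N)$ to the Lagrangian $J$ defined in \eqref{lagrangian}. Both the torsional rigidity $E$ and the Lebesgue measure are invariant under every rotation of $\rn$, so the same is true of $J$. Since $J$ is moreover translation invariant, I may assume without loss of generality that $\Om\in\cC^{3,\al}_\star$, and Theorem \ref{mainthm II} then yields that $\Om$ is $SO(N)$-invariant; combined with the fact that its barycenter sits at the origin, this is precisely radial symmetry of $\Om$ about $0$.

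A bounded open set of class $C^{3,\al}$ that is radially symmetric about the origin has boundary consisting of finitely many concentric spheres $\{|x|=r_i\}$, $0<r_1<\cdots<r_k$, so each connected component of $\Om$ is either a central ball $B_{r_1}$ (if $0\in\Om$) or a spherical shell of the form $\{r_i<|x|<r_{i+1}\}$. On the other hand, the proposition of Section \ref{sec 5} that immediately follows Proposition \ref{projection} shows that restricted criticality forces $g_\Om(0)\equiv 0$ on $\pa\Om$, which in view of the shape derivative formula \eqref{lagrangian der} translates into the pointwise overdetermined condition $|\gr u|^2\equiv\mu$ on \emph{all} of $\pa\Om$, with one and the same constant $\mu$ across every boundary sphere.

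It remains to see that no shell component is compatible with this overdetermined condition. On a ball component $B_R$ the radial solution of \eqref{torsion} is $u(r)=(R^2-r^2)/(2N)$, so $|\gr u|=R/N$ on $\pa B_R$ and $\mu$ is pinned down to $R^2/N^2$. On a shell component $\{r_1<|x|<r_2\}$, the radial solution takes the explicit form $u(r)=-r^2/(2N)+ar^{2-N}+b$ for $N\ge 3$ (with an analogous logarithmic expression for $N=2$); imposing $u(r_1)=u(r_2)=0$ allows one to eliminate $a$ and $b$, and the two remaining conditions $u'(r_1)=\sqrt\mu$ and $u'(r_2)=-\sqrt\mu$ reduce to a single algebraic identity in $r_1,r_2$ which, by an elementary manipulation, forces $r_1=r_2$. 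Hence $\Om$ contains no shells; and since two concentric balls about the origin cannot be disjoint unless they coincide, $\Om$ is a single ball, as claimed.

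The main obstacle is the concluding algebraic step excluding shell components, which is where the argument actually departs from (and in a sense replaces) Serrin's moving planes or Weinberger's $P$-function/rearrangement reasoning. Theorem \ref{mainthm II} does the heavy lifting on the symmetry side, after which the overdetermined condition only needs to be verified on the very simple one-parameter family of radial domains.
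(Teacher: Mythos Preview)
Your proof is correct and follows essentially the same route as the paper: invoke Theorem~\ref{mainthm II} to obtain radial symmetry, decompose $\Om$ into a central ball and concentric shells, and then rule out shell components by the explicit radial computation showing that $|\gr u|$ cannot take the same value on the inner and outer spheres. The only minor differences are cosmetic: you spell out the explicit radial formulas and the invocation of the proposition yielding $g_\Om(0)\equiv 0$, and you (correctly) note that the boundary consists of only finitely many spheres, whereas the paper allows a ``potentially infinite'' union.
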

\begin{proof}
By Theorem \ref{mainthm II}, $\Om$ must be spherically symmetric. That is $\Om$ can be written as a (potentially infinite) disjoint union as follows
\begin{equation*}
    B\cup \bigcup_{i\in I} A_i,
\end{equation*}
where the $A_i$'s are annuli (spherical shells) centered at the origin and $B$ is either a ball centered at the origin or the empty set. We claim that $I=\emptyset$, that is, $\Om=B$ is a ball. To this end, assume that $I$ is not empty and consider a connected component $A_i$ of $\Om$. Since $A_i$ is an annulus, say $A_i=B_R\setminus \ol{B_r}$ ($0<r<R$), the solution to \eqref{torsion} can be computed explicitly. 
One can check that, for no positive value of the two radii $r<R$, the function $|\gr u|$ attains the same value on the two connected components $\pa B_R$ and $\pa B_r$ of $\pa A_i$. This is a contradiction. We conclude that $I$ must be empty and, thus, $\Om$ is a ball as claimed. 
\end{proof}
\begin{remark}
An analogous result can be given linking problems \ref{isoperimetric} and \ref{soap bubble} by considering the Lagrangian $J(\om):= |\om|-\mu(|\pa\om|-P_0)$.
\end{remark}

In what follows, we will discuss how this theory applies to the two-phase Serrin's problem (see \cite{CY1, CYisaac} for a local analysis of the family of nontrivial solutions to the two-phase Serrin's problem near concentric balls). Let us briefly recall the notation. Let $D$, $\om$ be two bounded open sets of $\rn$ that satisfy $\ol D\subset \om$. Moreover, set 
$\sg:= \sg_c \cX_D+ \cX_{\rn\setminus D}$, 
where $\sg_c>0$ is a given positive constant.

Along the same lines as before, let $E_D(\om)$ denote the two-phase torsional rigidity of the pair $(D,\om)$. That is, $E_D(\om):= \int_\om \sg |\gr u_D|^2$, where $u_D$ is the solution to the following boundary value problem:
\begin{equation}\label{two phase torsion}
    -\dv(\sg \gr u_D)=1\quad\tin\om, \quad u=0\quad \ton \pa\om.
\end{equation}
As before, let $J_D:\cC^{1,\al}\to\RR$, $J_D(\om):=E_D(\om)-\mu(|\om|-V_0)$ be the Lagrangian associated to the problem of maximizing $E_D$ under volume constraint. 
It is known that $\Om$ is a critical shape for $J_D$ if and only if $u_D$ also satisfies the following overdetermined condition:
\begin{equation}\label{oc}
|\gr u|\equiv \mu\quad \ton \pa\Om.
\end{equation}
We will refer to the overdetermined problem \eqref{two phase torsion}-\eqref{oc} as the \emph{two-phase Serrin's problem}.

Let $(D,\Om)$ be a solution to the two-phase Serrin's problem. In what follows we will discuss how the geometries of $D$ and $\Om$ are related. A first noteworthy result in this direction is due to Sakaguchi (proven, in a more general setting in \cite[Theorem 5.1]{Sak bessatsu}):

\begin{thm}
Let $B\subset\rn$ be a ball centered at the origin. Moreover, let $D\in\cC^2$ be an open set with finitely many connected components and let $B\setminus\ol D$ be connected. If $(D,B)$ is a solution to the two-phase Serrin's problem, then $D$ and $B$ are concentric balls. 
\end{thm}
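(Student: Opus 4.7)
The plan is to exploit the fact that the outer domain is already a ball: the overdetermined condition becomes a prescribed constant Cauchy datum on the analytic sphere $\partial B$, which by Holmgren's theorem forces $u_D$ to coincide with the explicit torsion function of $B$ on a neighborhood of $\partial B$; one then propagates this identity throughout $B\setminus\ol D$ and reads off the shape of $\partial D$ from the transmission conditions.

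To begin, integrating $-\dv(\sg\gr u_D)=1$ over $B$ (using $\sg\equiv 1$ near $\partial B$ since $\ol D\subset B$) and applying the divergence theorem together with the Serrin condition $|\gr u_D|\equiv\mu$ on $\partial B$ gives $\mu=|B|/|\partial B|=R/N$, where $R$ is the radius of $B$. The polynomial $U(x):=(R^2-|x|^2)/(2N)$ solves $-\De U=1$ in $B$ with $U\equiv 0$ and $|\gr U|\equiv R/N$ on $\partial B$, so $u_D$ and $U$ share the same Cauchy data on the analytic surface $\partial B$ and both solve the analytic equation $-\De v=1$ in $B\setminus\ol D$. Holmgren's uniqueness theorem yields $u_D\equiv U$ in a one-sided neighborhood of $\partial B$; since $u_D-U$ is harmonic on the whole connected set $B\setminus\ol D$ and vanishes on an open subset, unique continuation upgrades this to the global identity $u_D\equiv U$ on $B\setminus\ol D$.

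Next, fix a connected component $D_i$ of $D$ and let $n$ denote its unit outward normal. By what we just established, the transmission conditions on $\partial D_i$ read $u_D=U$ and $\sg_c\pa_n u_D=\pa_n U$. Setting $w:=u_D-\sg_c^{-1}U$, one checks $\De w\equiv 0$ in $D_i$ and $\pa_n w\equiv 0$ on $\partial D_i$, so Green's identity forces $\int_{D_i}|\gr w|^2=0$ and hence $w$ is constant on $D_i$. Restricting this constancy back to $\partial D_i$ and assuming the nontrivial case $\sg_c\neq 1$, one obtains that $U$ itself is constant on $\partial D_i$, that is, $|x|$ is constant on $\partial D_i$. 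Thus $\partial D_i$ is contained in a single sphere centered at the origin; as a closed $C^2$ hypersurface of the same dimension as the sphere, an open-and-closed argument forces $\partial D_i$ to equal that sphere, and boundedness of $D_i$ then forces $D_i$ to be the enclosed open ball. Finally, since distinct concentric balls cannot be disjoint, $D$ has exactly one connected component and is concentric with $B$.

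The main obstacle is the Holmgren step: unique continuation only delivers $u_D\equiv U$ near $\partial B$, and without additional topological input the identity may fail on some inner component of $B\setminus\ol D$. This is exactly where the connectedness hypothesis on $B\setminus\ol D$ is crucial and, indeed, sharp: dropping it allows concentric spherical shells to serve as genuine non-ball solutions. The $C^2$ regularity of $D$ enters both in applying Holmgren on the outer side and in legitimizing the energy identity on each $D_i$.
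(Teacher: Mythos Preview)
The paper does not contain its own proof of this statement: it is quoted from Sakaguchi \cite[Theorem~5.1]{Sak bessatsu} without argument, so there is no in-paper proof to compare against directly. That said, your argument is correct and self-contained. The Holmgren step at the analytic sphere $\partial B$, followed by unique continuation of the harmonic function $u_D-U$ across the connected set $B\setminus\ol D$, pins down $u_D\equiv U$ there; the auxiliary function $w=u_D-\sg_c^{-1}U$ on each $D_i$, which is harmonic with vanishing Neumann data, then forces $U$ to be constant on $\partial D_i$ (when $\sg_c\neq 1$), so that each $\partial D_i$ is an entire origin-centered sphere and the pairwise disjointness of the $D_i$ collapses $D$ to a single concentric ball. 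Your remark that the connectedness of $B\setminus\ol D$ is sharp, witnessed by concentric spherical shells, is also correct and in the spirit of the discussion surrounding the theorem in the paper.

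One minor inaccuracy in your closing commentary: the $C^2$ regularity of $D$ is not what is used ``in applying Holmgren on the outer side'' --- Holmgren is applied at the analytic surface $\partial B$, where $u_D$ solves $-\De u_D=1$ and is smooth up to the boundary regardless of $D$ --- but rather to make sense of the transmission conditions and to justify Green's identity on each $D_i$.
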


The ``converse" does not hold. Indeed, when $D$ is a ball, it is known (see \cite{CYisaac}) that there exist symmetry-breaking solutions of the two-phase Serrin's problem for a discrete set of values of $\sg_c$. Moreover, the computations done in \cite{cava2018} show that these values are precisely the ones for which the ball $\Om_0$ is a \emph{degenerate} critical shape for $J_D$. We remark that the symmetry breaking solutions $\Om$ found in \cite{CY1} are not radially symmetric but only invariant with respect to a strictly smaller subgroup of rotations $\Ga\subsetneq SO(N)$, that is, $\Om$ only partially inherits the symmetry of $D$. The following direct application of Theorem \ref{mainthm I} states that the converse holds as well. 
\begin{theorem}\label{serrin two phase}
Let $D\in\cC^{0,1}$ and let $\Om\in\cC^{3,\al}$ be a nondegenerate critical shape for the Lagrangian $J_D$. If $D$ is invariant with respect to a one-parameter group of rotations $\{R_\vartheta\}_{\vartheta\in\RR}$, then $\Om$ is also invariant with respect to $\{R_\vartheta\}_{\vartheta\in\RR}$. In particular, if $D$ is a ball, then $\Om$ is also a ball concentric with $D$.
\end{theorem}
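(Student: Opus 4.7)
The plan is to apply Theorem \ref{mainthm I} directly to the Lagrangian $J_D$: the main conceptual content is to check that the $\Ga$-invariance of the inclusion $D$ lifts to a $\Ga$-invariance of the shape functional $J_D$, after which the nondegeneracy assumption does the rest of the work.

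To verify this invariance, fix $\ga\in\Ga$ and an admissible $\om$. Let $v$ solve \eqref{two phase torsion} on $\ga(\om)$ with coefficient $\sg=\sg_c\cX_D+\cX_{\rn\setminus D}$. Setting $u:=v\circ\ga$ and using that $\ga$ is an isometry together with $\cX_D\circ\ga=\cX_{\ga^{-1}(D)}=\cX_D$ (because $\ga(D)=D$), a direct change of variables shows that $u$ solves \eqref{two phase torsion} on $\om$ with the same $\sg$. The invariance of the Dirichlet integral under isometries then gives $E_D(\ga(\om))=E_D(\om)$, and since $|\ga(\om)|=|\om|$ one obtains $J_D(\ga(\om))=J_D(\om)$. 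Because $\Om\in\cC^{3,\al}$ is by hypothesis a nondegenerate critical shape of the $\Ga$-invariant functional $J_D:\cC^{1,\al}\to\RR$, Theorem \ref{mainthm I} (applied with $m=1$) immediately yields $\ga(\Om)=\Om$ for every $\ga\in\Ga$.

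For the second assertion, assume $D$ is a ball centered at some $x_0$ and take $\Ga$ to be the full group of rotations about $x_0$; the first part then gives that $\Om$ is invariant under every such rotation. Hence $\Om$ decomposes as a disjoint union of an open ball centered at $x_0$ (which must contain $\ol D$, since $\ol D\subset\Om$) together with, possibly, several concentric spherical shells $A_i=B_{R_i}(x_0)\setminus\ol{B_{r_i}(x_0)}$ lying outside $\ol D$. On every such $A_i$ the coefficient $\sg$ equals $1$, so the restriction of $u_D$ to $A_i$ solves the single-phase torsion problem, while the criticality condition \eqref{oc} forces $|\gr u_D|\equiv\mu$ on $\pa A_i$; the explicit radial computation already invoked in the proof of the preceding theorem rules this out for every $0<r_i<R_i$. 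No annular component can therefore occur and $\Om$ is a ball concentric with $D$.

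The only genuinely delicate point is the change of variables used to establish the rotational invariance of $J_D$: it is precisely the hypothesis $\ga(D)=D$ that keeps the discontinuous coefficient $\sg$ unchanged. Once that is in hand, Theorem \ref{mainthm I} yields the symmetry of $\Om$ for free, and in the ball case the annular components are excluded by recycling the explicit single-phase computation from the previous theorem.
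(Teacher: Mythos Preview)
Your proof is correct and follows exactly the route the paper intends: the paper presents this theorem without proof, merely calling it ``a direct application of Theorem~\ref{mainthm I}''. You have correctly supplied the two details the paper leaves implicit, namely the verification that the $\Ga$-invariance of $D$ makes $J_D$ a $\Ga$-invariant functional (so that Theorem~\ref{mainthm I} applies with $m=1$), and, for the ball case, the exclusion of annular components by recycling the single-phase computation from the preceding theorem---a step that is indeed needed to pass from ``radially symmetric'' to ``a ball'' but which the paper does not spell out here.
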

\begin{small}

\end{small}

\noindent
\textsc{
Mathematical Institute, Tohoku University, Aoba-ku, 
Sendai 980-8578, Japan}\\
\noindent
{\em Electronic mail address:}
cavallina.lorenzo.e6@tohoku.ac.jp


\begin{thebibliography}{99}

 
 


\bibitem[Al]
{Ale1958} \textsc{A.D.~Alexandrov}, {\em Uniqueness theorems for surfaces in the large V}. Vestnik Leningrad Univ., 13 (1958), 5--8 (English translation: Trans. Amer. Math. Soc., 21 (1962), 412--415). 


\bibitem[AP]{AP1983}
\textsc{A. Ambrosetti, G. Prodi}, {A Primer of Nonlinear Analysis}, Cambridge Univ. Press (1983).


\bibitem[AB]{ashbaugh benguria}
\textsc{M.S. Ashbaugh, R. Benguria}, {\em Proof of the Payne-P\'olya-Weinberger conjecture}. Bull. Amer. Math. Soc. 25 (1991), 19--29.





\bibitem[Br]{Br}
\textsc{F. Brock}, {\em An isoperimetric inequality for eigenvalues of the Stekloff problem}, Z.Angew. Math. Mech., 81 (2001), no. 1, 69--71.

\bibitem[BZ]{Burago Zalgaller}
\textsc{Y.D Burago \& V.A. Zalgaller}, Geometric Inequalities. (Translated from the Russian by
A.B. Sosinski\u{\i}.) Grundlehren der Mathematischen Wissenschaften 
(285). Springer Series in Soviet Mathematics. Springer-Verlag, Berlin,
1988.



\bibitem[Ca1]
{cava2018}\textsc{L.~Cavallina}, {\em Stability analysis of the two-phase torsional rigidity near a radial configuration}. Applicable Analysis, 98 (2019), no. 10, 1889–1900. https://doi.org/10.1080/00036811.2018.1478082



\bibitem[Ca2]{cava nondegenerate} \textsc{L. Cavallina}, {\em Nondegeneracy implies the existence of parametrized families of free boundaries}. arXiv:2109.12559




\bibitem[CY1]{CY1}
\textsc{L. Cavallina, T. Yachimura},
{\em On a two-phase Serrin-type problem and its numerical computation}, ESAIM: Control, Optimisation and Calculus of Variations (2020). https://doi.org/10.1051/cocv/2019048

\bibitem[CY2]{CYisaac}
\textsc{L. Cavallina, T. Yachimura},
{\em Symmetry breaking solutions for a two-phase overdetermined problem of Serrin-type}, Current Trends in Analysis, its Applications and Computation (Proceedings of the 12th ISAAC Congress, Aveiro, Portugal, 2019) Research Perspectives, Birkh\"auser (2022), 433--441. 


\bibitem[DL]{DL2019}
\textsc{M. Dambrine, J. Lamboley},
\emph{Stability in shape optimization with second variation}, Journal of Differential Equations 267 (2019), no. 5, 3009--3045,
https://doi.org/10.1016/j.jde.2019.03.033.

\bibitem[Dg]{DeG}
\textsc{E. De Giorgi}, {\em Sulla propriet\`a isoperimetrica dell'ipersfera, nella classe degli insiemi aventi
frontiera orientata di misura finita}, Atti Accad. Naz. Lincei. Mem. Cl. Sci. Fis. Mat. Nat. Sez.
I, 8 (1958), 33--44.

 \bibitem[DZ]
 {SG} \textsc{M.C.~Delfour, J.P.~Zol\'esio}, {Shapes and Geometries: Metrics, Analysis, Differential Calculus, and Optimization}. SIAM, Philadelphia (2001).





\bibitem[Fa]{Fa}
\textsc{G. Faber}, {\em Beweis, da{\ss} unter allen homogenen Membranen von gleicher Fl\"ache und gleicher Spannung die kreisf\"ormige den tiefsten Grundton gibt}, M\"unchen 1923, Sitzungsberichte: 1923, 8. https://publikationen.badw.de/de/003399311



\bibitem[GL]{garofalo lewis}
\textsc{N. Garofalo, J.L. Lewis}, \emph{A symmetry result related to some overdetermined boundary value problems}, Amer. J. Math. 111 (1989), no. 1, 9–-33.  

\bibitem[GS]{garofalo sartori}
\textsc{N. Garofalo, E. Sartori}, \emph{Symmetry in exterior boundary value problems for quasilinear elliptic equations via blow-up and a priori estimates}, Adv. Differential Equations 4 (1999), no. 2, 137-–161.

\bibitem[GT]{GT} \textsc{D.~Gilbarg, N.S.~Trudinger}, {Elliptic Partial Differential Equation of Second Order, second edition}. Springer (1983).



\bibitem[HP]
{HP2018} \textsc{A.~Henrot, M.~Pierre}, {Shape variation and optimization
(a geometrical analysis),} EMS Tracts in Mathematics,
Vol.28, European Mathematical Society (EMS),
Z\"urich, (2018).






\bibitem[Ka]{Ka}
\textsc{B. Kawohl}, {Rearrangements and convexity of level sets in PDE}, Lecture Notes in Mathematics, vol. 1150, Springer-Verlag, Berlin, 1985. MR 810619


\bibitem[KN]{KN77}\textsc{D.~Kinderlehrer, L.~Nirenberg}, 
{\em Regularity in free boundary problems},
Annali della Scuola Normale Superiore di Pisa - Classe di Scienze, Série 4, Tome 4 no. 2, (1977),  373--391.

\bibitem[Kr]{Kr}
\textsc{E. Krahn}, {\em {\"U}ber eine von Rayleigh formulierte Minimaleigenschaft des Kreises}, Math. Ann. 94, 97-–100 (1925). https://doi.org/10.1007/BF01208645








\bibitem[NP]{structure} \textsc{A.~Novruzi, M.~Pierre}, {\em Structure of shape derivatives}. Journal of Evolution Equations 2 (2002): 365--382.


\bibitem[Pa]{Palais 69} \textsc{R.S. Palais}, {\em The Morse Lemma for Banach Spaces}. Bulletin of the American Mathematical Society 75 (1969), 968--971.


\bibitem[Re]{reichel}
\textsc{W. Reichel}, \emph{Radial symmetry for elliptic boundary-value problems on exterior domains}, Arch. Rational Mech. Anal. 137 (1997) 381--394.

\bibitem[Sv]{SV} \textsc{B. de Saint-Venant}, {\em M\'emoire sur la torsion des prismes}, M\'emoires pr\'esent\'es par divers savants \`a l'Acad\'emie des Sciences, 14 (1856), 233--560.


\bibitem[Sa]{Sak bessatsu}
\textsc{S.~Sakaguchi},
{\em Two-phase heat conductors with a stationary isothermic surface and their related elliptic overdetermined problems}, RIMS K\^oky\^uroku Bessatsu B80 (2020), 113--132.



\bibitem[Se]
{Se1971} \textsc{J.~Serrin}, {\em A symmetry problem in potential theory}. Arch. Rat. Mech. Anal., 43 (1971), 304--318.

\bibitem[Sm]{smale} \textsc{S. Smale}, {\em Morse Theory and a non linear generalization of the Dirichlet problem}, Annals of Mathematics, Vol 80 No 2 (Sep. 1964), 382--396.

\bibitem[SZ]
{SZ1992} \textsc{J.~Sokolowski, J.P.~Zol\'esio}, {Introduction to Shape Optimization: Shape Sensitivity Analysis}, Springer Series in Computational Mathematics, 10, Springer--Verlag, Berlin, (1992).


\bibitem[Sz]{Sz}
\textsc{G. Szeg\H{o}}, {\em Inequalities for certain eigenvalues of a membrane of given area}, J. Rational Mech. Anal. 3 (1954), 343--356.



\bibitem[Ta]{Ta}
\textsc{G. Talenti}, {\em Elliptic equations and rearrangements}, Annali della Scuola Normale Superiore di Pisa - Classe di Scienze, S\'erie 4, Tome 3 (1976) no. 4, 697--718.

\bibitem[Tr]{tromba} \textsc{A. J. Tromba}, {\em A general approach to Morse Theory}, J. Diff. Geom., 12 (1977), 47--85.

\bibitem[Vo]
{Vo} \textsc{A.~L.~Vogel}, {\em Symmetry and regularity for general regions having a solution to certain overdetermined boundary value problems}, Atti Sem. Mat. Fis. Univ. Modena 40 (1992), no. 2, 443--484.

\bibitem[Wb1]{Wei}
\textsc{H. F. Weinberger}, {\em An isoperimetric inequality for the N-dimensional free membrane problem}, J. Rational Mech. Anal. 5 (1956), 633--636.

\bibitem[Wb2]{Wei preceding}
\textsc{H. F. Weinberger}, \emph{Remark on the preceding paper of Serrin}, Arch. Ration. Mech. Anal. 43 (1971), 319--320.

\bibitem[Ws]{Ws}
\textsc{R. Weinstock}, {\em Inequalities for a classical eigenvalue problem}, J. Rational Mech. Anal., 3 (1954), 745--753. 

\end{thebibliography}
\end{document}